\numberwithin{equation}{section}
\definecolor{OrangeRed}{cmyk}{0,0.6,1,0}            
\definecolor{DarkBlue}{cmyk}{1,1,0,0.20}
\definecolor{DarkGreen}{cmyk}{1,0,0.6,0.2}
\definecolor{myblue}{rgb}{0.66,0.78,1.00}
\definecolor{Violet}{cmyk}{0.79,0.88,0,0}
\definecolor{Lavender}{cmyk}{0,0.48,0,0}
\newtheorem{thm}{Theorem}[section]
\newtheorem{theorem}[thm]{Theorem}
\newtheorem{main theorem}[thm]{Main Theorem}
\newtheorem{corollary}[thm]{Corollary}
\newtheorem{lemma}[thm]{Lemma}
\newtheorem{lem}[thm]{Lemma}
\newtheorem{prop}[thm]{Proposition}
\theoremstyle{definition}
\newtheorem{defn}[thm]{Definition}
\newtheorem{definition}[thm]{Definition}
\newtheorem{remark}[thm]{Remark}
\newtheorem{example}[thm]{Example}
\def\C{\mathbb C}
\def\bcases{\begin{cases}}
\def\ecases{\end{cases}}
\newcommand{\D}{\mathbb D}
\newcommand{\N}{\mathbb N}
\newcommand{\R}{\mathbb R}
\newcommand{\FF}{\mathcal F}
\newcommand{\bea}{\begin{eqnarray*}}
\newcommand{\eea}{\end{eqnarray*}}
\newcommand{\be}{\begin{equation}}
\newcommand{\ee}{\end{equation}}
\newcommand{\ra}{\rightarrow}
\renewcommand{\epsilon}{\varepsilon}
\renewcommand{\phi}{\varphi}
\newcommand{\ov}{\overline}
\newcommand{\nk}{{n_k}}
\newcommand{\htop}{h_{\operatorname{top}}}
\begin{document}

\title{Dynamics of transcendental H\'enon maps III: Infinite entropy}

\author[L. Arosio]{Leandro Arosio$^{\dag}$}
\author[A.M. Benini]{Anna Miriam Benini$^{\ddag}$}
\author[J.E.  Forn{\ae}ss ]{John Erik Forn{\ae}ss}
\author[H. Peters]{Han Peters}

\today
\thanks{$^{\dag}$  Supported by the SIR grant ``NEWHOLITE - New methods in holomorphic iteration'' no. RBSI14CFME. Partially supported by the MIUR Excellence Department Project awarded to the Department of Mathematics, University of Rome Tor Vergata,  CUP E83C18000100006.}
\thanks{$^{\ddag}$ This project has been partially supported by the project 'Transcendental Dynamics 1.5' inside the program FIL-Quota Incentivante of the  University of Parma and co-sponsored by Fondazione Cariparma, and by Indam via the research group GNAMPA}
\address{L. Arosio: Dipartimento Di Matematica\\
Universit\`{a} di Roma \textquotedblleft Tor Vergata\textquotedblright\  \\
 Italy} \email{arosio@mat.uniroma2.it}
\address{A.M. Benini: Dipartimento  di   Matematica Fisica e Informatica\\
Universit\'a di Parma, IT.  \\
} \email{ambenini@gmail.com}
\address{H. Peters: Korteweg de Vries Institute for Mathematics\\
University of Amsterdam\\
the Netherlands} \email{hanpeters77@gmail.com}
\address{ J.E. Fornaess: Department of Mathematical Sciences\\
NTNU Trondheim, Norway} \email{john.fornass@ntnu.no}

\begin{abstract}
Very little is currently known about the dynamics of non-polynomial entire maps in several complex variables. The family of transcendental H\'enon maps offers the potential of combining ideas from transcendental dynamics in one variable, and the dynamics of polynomial H\'enon maps in two. Here we show that these maps all have infinite topological and measure theoretic entropy. The proof also implies the existence of infinitely many periodic orbits of any order greater than two.
\end{abstract}

\maketitle

\section{Introduction}

A \emph{transcendental H\'enon map} is a holomorphic automorphism of $\mathbb C^2$ of the form
$$
F(z,w) = (f(z) - \delta w, z),
$$
where $\delta \in \mathbb C \setminus \{0\}$, and $f$ is a transcendental entire function. Transcendental H\'enon maps form a bridge between two distinct families of holomorphic maps whose dynamical behaviors have been studied intensively in recent years: the family of complex (polynomial) H\'enon maps, and the family of transcendental entire functions.

In two previous papers \cite{henon1, henon2} we studied the dynamics of these maps, demonstrating non-trivial dynamical behavior. For example, the Julia set is always non-empty. Here we provide further evidence of non-trivial dynamics:

\begin{thm}\label{thm:Henon Entropy}
Any transcendental H\'enon map has infinite topological entropy.
\end{thm}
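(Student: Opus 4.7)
The plan is to realize, for every integer $n\ge 2$, the full one-sided shift on $n$ symbols as a topological factor of $F|_K$ for some compact $F$-forward-invariant set $K \subset \mathbb{C}^2$. This forces $\htop(F) \ge \log n$ for every $n$, and letting $n\to\infty$ gives the theorem. The construction is an analogue of a Smale horseshoe, whose "$n$ branches" are produced by the transcendence of $f$; restricting to periodic symbol sequences would also yield the infinitely many periodic orbits advertised in the abstract.

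\emph{Step 1 (islands for $f$).} First I would produce, for every $n$, a radius $R=R(n)$, a closed disk $V=\overline{D(0,R)}$, and $n$ pairwise disjoint closed Jordan disks $U_1,\dots,U_n \subset V$, such that each restriction $f|_{U_i}\colon U_i\to V'$ is a biholomorphism onto $V':=\overline{D(0,(1+|\delta|)R+1)}$. That such simultaneous univalent islands exist is a classical feature of transcendental entire maps: the Nevanlinna characteristic of $f$ grows unboundedly on $D(0,R)$ as $R\to\infty$, so the Ahlfors/Bergweiler islands theorem yields arbitrarily many simply-connected islands over the fixed target $V'$, and a pigeonhole argument over an annulus containing $V$ forces $n$ of them to lie in $V$.

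\emph{Step 2 (crossed mappings and symbolic coding).} Set $B_i := U_i\times V \subset \mathbb{C}^2$. Because $F(z,w)=(f(z)-\delta w,z)$, the second coordinate of $F(B_i)$ equals $U_i$, while the first coordinate sweeps out $f(U_i)-\delta V = V'-\delta V \supset V$. Hence for every pair $(i,j)$ the transition set
\[
\Lambda_{ij} \;:=\; B_i \cap F^{-1}(B_j) \;=\; \bigl\{(z,w)\in U_i\times V : f(z)-\delta w \in U_j\bigr\}
\]
is a non-empty "graph" over $U_i$ whose fibres $\{w : f(z)-\delta w\in U_j\}$ are single affine preimages of $U_j$. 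This is precisely the crossed-mapping-of-degree-one structure used in the polynomial H\'enon setting by Hubbard--Oberste-Vorth and Bedford--Smillie, and a standard nested-compactness argument shows that for every $\underline{\sigma}\in\{1,\dots,n\}^{\mathbb{N}}$ the set $K(\underline{\sigma}):=\bigcap_{k\ge 0} F^{-k}(B_{\sigma_k})$ is non-empty. The itinerary map $K:=\bigcup_{\underline\sigma} K(\underline\sigma) \to \{1,\dots,n\}^{\mathbb{N}}$ is then a continuous surjection semiconjugating $F|_K$ to the full shift, whence $\htop(F)\ge\log n$.

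\emph{Main obstacle.} The delicate point is Step 1. Islands theorems produce islands somewhere in $\mathbb{C}$, and for transcendental $f$ they typically escape to infinity, whereas here I need $n$ islands confined inside a single disk $V$ that is strictly smaller than its $f$-image $V'\supset(1+|\delta|)V$. Balancing these two scales---choosing $R$ large enough that $M(f,R)$ dwarfs $(1+|\delta|)R$, while still trapping the islands inside $V$ via the maximum principle---is where the main technical work lies. Once this is accomplished, the crossed-mapping structure of Step 2 is essentially forced by the H\'enon form of $F$, and the symbolic coding is routine.
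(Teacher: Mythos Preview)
Your Step~1 is not merely delicate --- as stated it is false. Take $f(z)=e^z$. For every $R>0$ your target disk $V'=\overline{\D_{(1+|\delta|)R+1}}$ contains the asymptotic value $0$, so $f^{-1}(V')$ is the half-plane $\{\Re z<\log((1+|\delta|)R+1)\}$, a single connected set on which $e^z$ is infinite-to-one. There is therefore \emph{no} univalent island over $V'$, let alone $n$ of them inside $V$. The same happens for $f(z)=\cos z$: both critical values $\pm1$ lie in $V'$ for every $R>0$, and since $|\cos(x+iy)|^2=\cos^2 x+\sinh^2 y$, the set $\cos^{-1}(\D_C)$ contains a full horizontal strip and is connected for all $C>1$. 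Your invocation of the Ahlfors islands theorem is misapplied: that theorem produces a univalent preimage of \emph{at least one} among several Jordan domains with pairwise disjoint closures; it says nothing about many univalent preimages of a single large disk, and indeed no such preimage can exist once that disk contains the singular values of $f$.

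This obstruction is exactly why the paper's proof splits in two. When the rescaled family $f_n(z)=f(nz)/n$ is \emph{not} quasi-normal (which includes $e^z$ and $\cos z$), the paper does build a horseshoe in the spirit of your Step~2, but the targets are many \emph{small} disks $\D_R(x_j+\delta x_\ell)$ with pairwise disjoint closures, and Ahlfors' theorem only guarantees islands over $k-2$ of the $k$ targets from each source disk --- still enough for entropy $\log(k-2)$. When $(f_n)$ \emph{is} quasi-normal, no island picture is used at all: instead one shows that the winding number of $f_{n_k}(\partial\D_r)$ around $0$ can be made arbitrarily large, so the conjugate map $F_{n_k}$ is H\'enon-like of arbitrarily high degree in Dujardin's sense, and his entropy theorem applies. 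A single-mechanism horseshoe over one large target cannot replace this dichotomy.
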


As an immediate corollary we obtain an alternative proof that the Julia set is non-empty, and by the Variational Principle that the metric entropy is also infinite. The proof implies that a transcendental H\'enon map has infinitely many periodic cycles of any order greater than $2$. This result gives a complete description on the possible periodic cycles, since there exist transcendental H\'enon maps without any periodic cycles of orders $1$ and $2$~\cite{henon2}. We recall the analogy with one-dimensional transcendental functions, which may not have any fixed points, but always have infinitely many periodic cycles of any order greater than $1$.

\medskip

The topological entropy of holomorphic maps is a topic with an interesting history. It was shown by Gromov that the topological entropy of a rational function of degree $d$ is $\log(d)$, a result written in a preprint in 1977, but not published until 2003 \cite{gromov}. In the meantime the result was obtained independently by Lyubich \cite{lyubich}.

Smillie~\cite{smillie} proved in 1990 that a polynomial H\'enon map of degree $d$ has topological entropy $\log(d)$. Preliminary results for transcendental H\'enon maps were obtained by Dujardin~\cite{Dujardin04}, who proved that the entropy of a \emph{H\'enon-like map} of degree $d$ is $\log(d)$ as well, and used this fact to construct examples of transcendental H\'enon maps with infinite topological entropy.

The fact that transcendental functions in one complex variables always have infinite entropy was proved in the paper \cite{transcendentalentropy} by the three last authors. However, after completing our paper we learned that this result was obtained earlier by Markus Wendt~\cite{WendtDiploma, Wendt, WendtMan}, who never published this work. The proof we present in this paper will closely follow ideas from the proof of Wendt.

\subsection{Outline of the proof}

Following Wendt we give  different proofs depending on whether the family of rescaled maps $f_n(z) := f(n\cdot z)/n$ is \emph{quasi-normal} or not (see Definition~\ref{defn:quasinormality}). If this family is quasi-normal, Wendt showed that $f$ acts as a polynomial-like map of arbitrarily large degree on larger and larger domains, hence has infinite entropy. Similarly, we show that $F$ acts as a H\'enon-like map of arbitrarily large degree, hence by Dujardin's result $F$ also has infinite entropy.

When the family $(f_n)$ is not quasi-normal, Wendt shows that one can find an arbitrarily large number of disks with pairwise disjoint closures, such that each of these disks contains a univalent preimage of all but at most $2$ of the disks; a consequence of the Ahlfors Five Islands Theorem~\cite{Bergweiler}. In the H\'enon setting, we prove similarly that any suitable graph over each of these disks contains a preimage of a suitable graph over all but at most $2$ of the other disks. In both the quasi-normal and the non quasi-normal setting we obtain completely invariant compact subsets on which the entropy is arbitrarily large. It follows that the topological entropy is infinite.

\medskip

In section \ref{section:prelim} we recall background on topological entropy, including the definition of entropy on non-compact spaces that we will use. We also discuss the notion of quasi-normality, and recall Ahlfors Five-Islands Theorem and some of its consequences. In section \ref{section:proof} we prove Theorem \ref{thm:Henon Entropy}, first under the assumption that the family $(f_n)$ is quasi-normal, and then under the assumption that the family is not quasi-normal. In section \ref{section:periodic} we prove the existence of periodic cycles of any period at least $3$.  In section \ref{section:lacunary} we construct examples of transcendental H\'enon maps with arbitrarily slow or fast growing entropy in terms of the size of the compact sets.

\medskip

\noindent {\bf Acknowledgment.} The result obtained here answers a question asked to us by both Romain Dujardin and Nessim Sibony. We are grateful for their suggestion, which stimulated this research. The proof of our result closely follows the ideas of Markus Wendt in unpublished work. We are grateful for Walter Bergweiler for bringing this work to our attention, and for further discussion on this topic.

\section{Preliminaries}\label{section:prelim}
\subsection{Entropy}

For maps acting on compact spaces the concept of topological entropy has been introduced in \cite{Adler}.
\begin{defn}[Definition of topological entropy for compact sets]\label{entropy compact}
Let $f:X \rightarrow X$ be a continuous self-map of a compact metric space $(X, d)$.
 Let   $n \in \mathbb N$ and $\delta > 0$.
A  set $E\subset X$ is called \emph{$(n,\delta)$-separated}  if   for any $z\neq w\in E$ there exists $k\leq n-1$ such that
$d(f^k(z),f^k(w))> \delta$.
Let $K(n, \delta)$ be the maximal cardinality of an $(n,\delta)$-separated set.
Then the \emph{topological entropy} $\htop(X,f)$ is defined as
$$
\htop(X,f):=\sup_{\delta>0}\left\{\limsup_{n\ra\infty}\frac{1}{n}\log K(n,\delta)\right\}.
$$
\end{defn}
In the literature there are several non-equivalent  natural generalizations for the definition of topological entropy on non-compact spaces (see for example
\cite{Bowen}, \cite{Bowen71}, \cite{Bowen73}, \cite{Hofer}, and more recently \cite{Hasselblatt}). We will use the definition introduced by \cite{CanovasRodriguez} which is smaller than or equal to all the ones mentioned above.
\begin{defn}
Let $f:Y \rightarrow Y$ be a continuous self-map of a  metric space $(Y, d)$. Then
 the \emph{topological entropy} $\htop(Y,f)$ is defined as the supremum of $\htop(X,f)$ over all forward invariant compact subsets $X\subset Y.$ If there is no forward invariant compact subset the topological entropy is defined to be $0$.
\end{defn}
\begin{remark}
Notice that this definition does not depend on the metric inducing the topology on $Y$, and is invariant by topological conjugacy, hence the name ``topological entropy'' is justified. Notice also that in [REF] the last three named authors used a slightly different definition of topological entropy, a priori  larger than or equal to the above one.
\end{remark}

\begin{lemma}
Assume that the map $f\colon Y\to Y$ is injective. Then $\htop(Y,f)$ is equal to  as the supremum of $\htop(X,f)$ over all {\sl completely} invariant compact subsets $X\subset Y.$
\end{lemma}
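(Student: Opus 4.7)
My plan is to establish both inequalities. The inclusion $\sup_{X \text{ CI}} \htop(X,f) \le \htop(Y,f)$ is immediate, since every completely invariant compact subset is in particular forward invariant. For the reverse direction, I would associate to each forward invariant compact $X \subseteq Y$ a completely invariant compact subset having the same topological entropy.

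The natural candidate is $X_\infty := \bigcap_{n \ge 0} f^n(X)$. Compactness follows from the fact that $\{f^n(X)\}$ is a decreasing sequence of compact sets. Complete invariance uses the injectivity of $f$ in two ways. First, injectivity implies $f\br{\bigcap_n A_n} = \bigcap_n f(A_n)$, which combined with $f(X) \subseteq X$ yields $f(X_\infty) = \bigcap_{n \ge 1} f^n(X) = X_\infty$. Second, if $y \in Y$ satisfies $f(y) \in X_\infty$, then $f(y) \in f^n(X)$ for all $n \ge 1$; injectivity then forces $y \in f^{n-1}(X)$ for every such $n$, and hence $y \in X_\infty$.

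The crux is the equality $\htop(X,f) = \htop(X_\infty,f)$. My plan is to invoke the variational principle, which expresses $\htop(X,f)$ as $\sup_\mu h_\mu(f)$ over $f$-invariant Borel probability measures $\mu$ on $X$, and similarly for $X_\infty$. It therefore suffices to show that every such $\mu$ is actually supported on $X_\infty$. Using injectivity one checks that $f^{-1}(f^n(X)) \cap X = f^{n-1}(X)$ for $n \ge 1$, so invariance gives $\mu(f^n(X)) = \mu(f^{-1}(f^n(X))) = \mu(f^{n-1}(X))$; starting from $\mu(X) = 1$, induction yields $\mu(f^n(X)) = 1$ for all $n$, and continuity of measure along the decreasing intersection produces $\mu(X_\infty) = 1$. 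Consequently $h_\mu(f)$ is unchanged if we view $f$ as a map on $X_\infty$, and the two supremums coincide.

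The main obstacle is precisely this entropy equality: the inclusion $X_\infty \subseteq X$ only gives $\htop(X_\infty,f) \le \htop(X,f)$ for free, while the reverse inequality is subtle. A naive combinatorial attempt, pushing an $(n,\delta)$-separated set in $X$ forward into $f^N(X)$ by $f^N$, fails because the separation witness for a given pair may occur at a time $k < N$ and not survive after iteration. The variational principle sidesteps this difficulty cleanly; an alternative route through the inverse limit of $(X,f)$ is also feasible, exploiting that injectivity makes the canonical projection onto $X_\infty$ a homeomorphism.
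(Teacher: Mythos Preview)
Your argument is correct and follows the same overall strategy as the paper: given a forward invariant compact $X$, pass to $X_\infty=\bigcap_{n\ge 0} f^n(X)$, check that it is compact and completely invariant (using injectivity), and then show $\htop(X,f)=\htop(X_\infty,f)$.

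The only difference lies in how this last entropy equality is justified. The paper simply quotes the classical fact (e.g.\ Block--Coppel) that for any continuous self-map $g$ of a compact metric space $K$ one has $\htop(K,g)=\htop\bigl(\bigcap_{n\ge 0} g^n(K),g\bigr)$, without using injectivity at that step. You instead supply a self-contained proof via the variational principle, showing that every $f$-invariant Borel probability measure on $X$ is supported on $X_\infty$ and hence the two suprema of metric entropies agree. Your route has the advantage of being self-contained; the paper's citation is shorter and highlights that this particular step is a general phenomenon, independent of injectivity. (Indeed, your measure argument does not really need injectivity either: from $X\subset f^{-1}(f(X))$ and invariance one gets $\mu(f(X))=\mu(f^{-1}(f(X)))\ge \mu(X)=1$, and induction gives $\mu(f^n(X))=1$ for all $n$.)
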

\begin{proof}
Let $X$ be a compact forward invariant subset of $Y$. Consider  the compact set
 $\Lambda:=\bigcap_{n\geq 0}f^n(X).$
 Since $f$ is injective, it follows that the map $$f|_\Lambda\colon \Lambda\to \Lambda$$ is bijective, and in particular $\Lambda$ is completely invariant by $f$. The following classical result yields the lemma.
\end{proof}
\begin{theorem}
Let $g:K \rightarrow K$ be a continuous self-map of a compact metric space $(K, d)$ and let $\Lambda :=\bigcap_{n\geq 0}g^n(K).$ Then
$$\htop(K,f)= \htop(\Lambda, f).$$
\end{theorem}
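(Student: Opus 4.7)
Plan: The easy direction $\htop(\Lambda, g) \leq \htop(K, g)$ is immediate from Definition~\ref{entropy compact}: any $(n,\delta)$-separated set for the restricted system $(\Lambda, g|_\Lambda)$ is also $(n,\delta)$-separated for $(K,g)$, so the maximum cardinalities satisfy the corresponding inequality and taking limsup in $n$ and supremum in $\delta$ preserves it.

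For the reverse inequality $\htop(K, g) \leq \htop(\Lambda, g)$, my plan is to invoke the variational principle: topological entropy equals the supremum, over all $g$-invariant Borel probability measures $\mu$, of the measure-theoretic entropy $h_\mu(g)$. The key fact I need is that every such $\mu$ is automatically supported on $\Lambda$. Writing $K_N := g^N(K)$, invariance gives
$$
\mu(K \setminus K_1) = \mu\bigl(g^{-1}(K \setminus K_1)\bigr) = \mu(\emptyset) = 0,
$$
because $g(K) = K_1$. Hence $\mu(K_1) = 1$, and iterating the same observation with $g|_{K_N}$ in place of $g|_K$ gives $\mu(K_N) = 1$ for every $N$. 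Continuity of measure from above on the decreasing sequence of closed sets $K_N$ then yields $\mu(\Lambda) = 1$. The converse (any invariant measure on $\Lambda$ is trivially invariant on $K$) is obvious, so the two sets of invariant measures coincide, and the variational principle delivers the equality of topological entropies.

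The main obstacle I foresee is the question of whether a more elementary proof, not using the variational principle, is preferred. It is straightforward to prove $\htop(K,g) = \htop(K_N, g)$ for each fixed $N$ by iterating a partition argument: given an $(n, \delta)$-separated set $E \subset K$, cover $K$ by finitely many, say $m$, sets of diameter $< \delta/2$; within each class of the induced partition of $E$, distinct points must witness their separation at a time $k \geq 1$, and $g$ is automatically injective on each such class, so the maximum separated-set cardinalities satisfy a bound of the form $K_K(n,\delta) \leq m \cdot K_{K_1}(n-1,\delta)$, which upon taking limsup yields $\htop(K, g) \leq \htop(K_1, g)$. Passing from $K_N$ to $\Lambda$ as $N \to \infty$, however, is genuinely delicate without equicontinuity of the family of iterates $\{g^i\}$, since a maximum $(n,\delta)$-separated set in $K_N$ cannot in general be projected to a separated set in $\Lambda$ while preserving orbit separation; this is precisely the difficulty that the variational principle cleanly bypasses.
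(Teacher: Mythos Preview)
Your argument via the variational principle is correct: every $g$-invariant Borel probability measure on $K$ is supported on $\Lambda$ (your inductive step, restricting to $K_N$ and observing $(g|_{K_N})^{-1}(K_N\setminus K_{N+1})=\varnothing$, is sound), so the suprema defining the two topological entropies range over the same measures.

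The paper does not actually prove this statement; it merely cites Block and Coppel as a reference. So there is no ``paper's own proof'' to compare against beyond that citation. The approach in Block--Coppel is elementary, using open covers rather than invariant measures, and is closer in spirit to the sketch you give in your second paragraph for the equality $\htop(K,g)=\htop(K_N,g)$. (That sketch is essentially right: if two points of a maximal $(n,\delta)$-separated set lie in the same element of a cover by sets of diameter at most $\delta$, their separation must occur at a time $k\ge 1$, forcing $g$ to be injective on that piece of $E$ and $g(E\cap\text{piece})$ to be $(n-1,\delta)$-separated in $K_1$.) Your diagnosis that the passage $K_N\to\Lambda$ is the delicate step in an elementary approach is accurate; the variational principle is a clean way to bypass it, at the cost of importing a substantially heavier theorem than the statement being proved. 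Either route is acceptable here, since the result is classical and only used as a black box.
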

For the proof, see e.g. Block and Coppel.

\subsection{Ahlfors Theorem and quasinormality}

The following is a version of Ahlfors five islands Theorem which can be found in  \cite{Bergweiler}, Theorem A.1. A more classical formulation of Ahlfor's five islands theorem and Corollary~\ref{cor:3 island}  in terms of  regularly exhaustible Riemann surfaces can be found in  \cite{Schiff}, Chapter 1.9.
\begin{thm}[Ahlfors five islands Theorem]
Let $D_1, \ldots, D_5$ be Jordan domains on the Riemann sphere with pairwise disjoint closures and let $D\subset\C$ be a domain. Then the family of all meromorphic functions $f:D\ra\hat{\C}$ with the property that none of the $D_j$ has a univalent preimage in $D$ is normal.
\end{thm}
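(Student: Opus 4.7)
The plan is to reduce the family-level statement to a statement about a single limit function via Zalcman's rescaling lemma, and then invoke the Ahlfors islands theorem for a single non-constant meromorphic function on $\C$.

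\emph{Step 1 (Zalcman rescaling).} Suppose for contradiction that the family $\mathcal F$ of meromorphic maps $D\to\hat\C$ with no univalent preimage of any $D_j$ fails to be normal at some $z_0 \in D$. Zalcman's rescaling lemma then produces sequences $f_n \in \mathcal F$, $z_n \to z_0$, and $\rho_n \searrow 0$ such that the rescaled maps $g_n(\zeta) := f_n(z_n + \rho_n \zeta)$ converge locally uniformly on $\C$ (in the spherical metric) to a non-constant meromorphic $g : \C \to \hat{\C}$ with bounded spherical derivative, and in particular of order at most $2$.

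\emph{Step 2 (Island theorem for the limit $g$).} The core of the argument is the single-function Ahlfors five islands theorem: any non-constant meromorphic $g : \C \to \hat\C$ admits a univalent preimage of at least one of $D_1, \dots, D_5$. I would establish this via Ahlfors's theory of covering surfaces. Let $A(r)$ denote the mean sheet number and $L(r)$ the relative boundary length of the cover $g : \{|\zeta|<r\}\to\hat\C$, and let $n_1(r, D_j)$ count the simple (univalent) islands over $D_j$ inside $\{|\zeta|<r\}$. Ahlfors's second fundamental theorem for covering surfaces yields
$$
(q-2)\,A(r) \;\le\; \sum_{j=1}^q n_1(r, D_j) + h\,L(r),
$$
where $h$ depends only on the spherical configuration of the $D_j$. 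For $q = 5$, the classical isoperimetric-type fact $\liminf_{r\to\infty} L(r)/A(r) = 0$ (which holds for any non-constant meromorphic function on $\C$) is incompatible with all $n_1(r, D_j)$ vanishing identically; hence some $D_j$ has a simple island in $\C$.

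\emph{Step 3 (Hurwitz transfer back to $f_n$).} Let $U \Subset \C$ be a univalent preimage of some $D_j$ under $g$, so that $g$ restricts to a biholomorphism $U\to D_j$ sending $\partial U$ bijectively onto $\partial D_j$. Since $g_n \to g$ uniformly in the spherical metric on a neighborhood of $\overline U$, a Hurwitz-type argument (applied on a neighborhood of $\partial U$ avoiding $g^{-1}(\partial D_j)\setminus \partial U$) yields, for $n$ large, an open set $U_n$ close to $U$ on which $g_n$ restricts to a biholomorphism $U_n \to D_j$. Unwinding the affine change of variable $\zeta \mapsto z_n + \rho_n \zeta$, we conclude that $f_n$ admits a univalent preimage of $D_j$ inside the small disk $z_n + \rho_n\,\overline U$, which lies in $D$ for $n$ large. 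This contradicts $f_n \in \mathcal F$.

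\emph{Main obstacle.} Step 2 is by far the hard part: it contains the entire analytic depth of the theorem, namely the quantitative second fundamental inequality and the isoperimetric control for covering surfaces. In a paper of this length I would simply quote the single-function version as a black box (for instance from Hayman's \emph{Meromorphic Functions}, or from the survey \cite{Bergweiler}), and write only Steps 1 and 3 in detail, since those are formal consequences of Zalcman's lemma and a spherical Hurwitz argument.
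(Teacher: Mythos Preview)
The paper does not prove this statement at all: it is quoted verbatim as Theorem~A.1 from Bergweiler's survey \cite{Bergweiler} and used as a black box. So there is no ``paper's own proof'' to compare against; the authors only need the corollary for holomorphic functions with three bounded targets (Corollary~\ref{cor:3 island}).

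Your sketch is in fact the standard modern proof of the normal-families version, essentially the one Bergweiler gives in the cited survey: Zalcman rescaling to a non-constant limit $g$, then the single-function Ahlfors islands inequality to produce a simple island for $g$, then a Hurwitz transfer back to $f_n$. The outline is correct. One technical point in Step~3 deserves care: as stated you only know $g$ is univalent on $U$, not on a neighborhood of $\overline U$, so the Hurwitz argument needs a cushion. The clean fix is to first enlarge the targets slightly: choose Jordan domains $D_j''\supset\!\supset D_j$ still with pairwise disjoint closures, apply Step~2 to the $D_j''$ to obtain a simple island $U''$ of $g$ over some $D_j''$, and set $U:=U''\cap g^{-1}(D_j)\Subset U''$. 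Then $g$ is univalent on the neighborhood $U''$ of $\overline U$, and the Hurwitz/argument-principle step goes through cleanly to give $f_n$ a univalent preimage of $D_j$ in $D$ for large $n$. With that adjustment your proposal is a complete and correct proof, going well beyond what the paper itself provides.
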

As observed in \cite{Bergweiler} after the statement of Theorem B.3, if the functions are holomorphic on  $D$ and the domains $D_i$ are bounded the number 5 can be replaced by 3.
\begin{corollary}\label{cor:3 island}Let $D_1, \ldots, D_k$ with $k\geq3$ be bounded Jordan domains on the Riemann sphere with pairwise disjoint closures and let $D\subset\C$ be a domain.
Let $\FF$ be   family of holomorphic  functions $f:D\ra\hat{\C}$ which is not normal in $D$. Then for all but at most 2 values of $j$,  $D_j$ has a univalent preimage in $D$.
\end{corollary}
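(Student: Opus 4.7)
The plan is a direct contrapositive argument relying on the three-island variant of Ahlfors' Theorem recorded in the remark immediately preceding the corollary. Suppose, toward a contradiction, that there exist three distinct indices $j_1, j_2, j_3 \in \{1, \ldots, k\}$ with the property that, for every $f \in \mathcal{F}$, none of the domains $D_{j_1}, D_{j_2}, D_{j_3}$ has a univalent preimage in $D$ under $f$. Any three of the original $D_j$ remain bounded Jordan domains on the Riemann sphere with pairwise disjoint closures, so $D_{j_1}, D_{j_2}, D_{j_3}$ satisfy the hypotheses of the three-island version of Ahlfors' Theorem.

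Let $\mathcal{G}$ denote the family of all holomorphic maps $g\colon D \to \hat{\mathbb{C}}$ for which none of $D_{j_1}, D_{j_2}, D_{j_3}$ has a univalent preimage in $D$ under $g$. By the three-island version of Ahlfors' Theorem, $\mathcal{G}$ is a normal family. The contradiction hypothesis gives $\mathcal{F} \subseteq \mathcal{G}$, so $\mathcal{F}$ is itself normal, contradicting the standing assumption that it is not. We conclude that for all but at most two values of $j$ some $f \in \mathcal{F}$ has a univalent preimage of $D_j$ in $D$, which is the statement of the corollary.

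The argument is essentially the contrapositive of the ambient Ahlfors-type theorem, so no real obstacle is anticipated; the only routine verifications are that any three-element subfamily of $\{D_1, \ldots, D_k\}$ inherits the required geometric hypotheses (bounded, Jordan, pairwise disjoint closures), and that normality is preserved under passage to subfamilies.
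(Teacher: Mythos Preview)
Your argument is correct and is exactly the contrapositive reading of the three-island Ahlfors theorem that the paper itself invokes; the paper does not give a separate proof of this corollary but simply records it as the observation in \cite{Bergweiler} that for holomorphic (rather than meromorphic) functions and bounded target domains three islands suffice. Your write-up just makes that deduction explicit, so there is nothing to add.
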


We recall the  definition of quasi-normality from the Appendix in \cite{Schiff}.
\begin{definition}\label{defn:quasinormality}
Let $\Omega\subset \C$ be a domain. A family  $\mathcal{F}$ of holomorphic functions on $\Omega$ is {\sl quasi-normal} if for every sequence $(f_n)$ of functions in $\Omega$ there exists a finite set $Q\subset \Omega$ and a subsequence $(f_{n_k})$ of $(f_n)$ which converges uniformly on compact subsets of $\Omega\setminus Q$.
\end{definition}

The rest of this subsection is devoted to the proof of  the following Proposition~\ref{prop:non normal subsequences}, which in turn will be used in the proof of the not quasi-normal case.

\begin{prop}\label{prop:non normal subsequences}
Let $\Omega\subset \C$ be a domain and let $\FF$ be a not quasi-normal family of holomorphic functions $\Omega\to \C$. Then there exists a sequence  $(f_n)
\subset\FF$ and an infinite subset $Q =(x_j)_{j\geq 1}\subset \Omega$ such that
no subsequence of $(f_n)$ converges uniformly in any neighborhood of any $x_j$.
\end{prop}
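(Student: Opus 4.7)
The plan is to construct the sequence $(f_n)$ and the set $Q = (x_j)_{j \ge 1}$ by a nested inductive construction followed by a diagonal argument. For any sequence $(g_n)$ of holomorphic functions on $\Omega$, let $N(g_n) \subseteq \Omega$ denote the set of $x$ such that no subsequence of $(g_n)$ converges uniformly in any neighborhood of $x$. Two elementary observations: $N(g_n) \subseteq N(h_n)$ whenever $(h_n)$ is a subsequence of $(g_n)$, and any subsequence of a sequence witnessing non-quasi-normality of $\FF$ continues to witness it. Starting from a witness $(f_n^{(0)}) \subset \FF$ of non-quasi-normality, I would inductively produce pairwise distinct points $x_1, x_2, \dots \in \Omega$ and nested subsequences $(f_n^{(0)}) \supseteq (f_n^{(1)}) \supseteq \dots$ with $\{x_1, \dots, x_j\} \subseteq N(f_n^{(j)})$. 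The standard diagonal $(f_n) := (f_n^{(n)})$ is then a subsequence of $(f_n^{(0)})$ which, for $n \ge j$, is a subsequence of $(f_n^{(j)})$, so $N(f_n) \supseteq \{x_j\}_{j \ge 1}$, and we take $Q := \{x_j\}_{j \ge 1}$.

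The crux is the inductive step. Given $(f_n^{(j-1)})$ (still non-quasi-normal) and $P_{j-1} := \{x_1, \dots, x_{j-1}\}$, I look for $x_j \in \Omega \setminus P_{j-1}$ and a subsequence $(f_n^{(j)}) \subseteq (f_n^{(j-1)})$ with $x_j \in N(f_n^{(j)})$. Suppose no such pair exists. Then every $x \in \Omega \setminus P_{j-1}$ is \emph{super-regular} for $(f_n^{(j-1)})$, in the sense that every subsequence of $(f_n^{(j-1)})$ admits a further subsequence converging uniformly in some neighborhood of $x$. The contradiction comes from the key claim: \emph{if every point of an open set $U \subseteq \Omega$ is super-regular for a sequence $(g_n)$ of holomorphic functions, then some subsequence of $(g_n)$ converges uniformly on compact subsets of $U$.} Applied with $U := \Omega \setminus P_{j-1}$, this contradicts non-quasi-normality of $(f_n^{(j-1)})$, since $P_{j-1}$ is finite.

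To prove the key claim I would quantify super-regularity via the radius function
\[
\rho(x) := \sup\bigl\{\,r>0 : B(x,r)\subseteq U \text{ and every subsequence of }(g_n)\text{ admits a subsubsequence converging uniformly on }B(x,r)\,\bigr\}.
\]
Super-regularity gives $\rho(x) > 0$ pointwise, and the inclusion $B(y,\rho(x)-|y-x|)\subseteq B(x,\rho(x))$ yields $\rho(y) \ge \rho(x) - |y-x|$, so $\rho$ is lower semicontinuous. On any compact $K \subset U$, the minimum $\rho_0 := \min_K \rho$ is therefore strictly positive. Covering $K$ by finitely many balls $B(x_i, \rho_0/2)$, $i = 1, \dots, r$, with $x_i \in K$, and starting from $(g_n)$ I extract successively subsequences converging uniformly on $B(x_i, \rho_0)$; this is possible because $\rho(x_i) \ge \rho_0$ and super-regularity is inherited by passage to subsequences. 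After $r$ such extractions the resulting subsequence of $(g_n)$ converges uniformly on $K$. Exhausting $U$ by an increasing sequence of compacts and diagonalizing then yields the desired subsequence converging uniformly on compact subsets of $U$.

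The main obstacle is the key claim itself. The naive weaker hypothesis ``at every point some subsequence of $(g_n)$ converges uniformly in a neighborhood'' is not sufficient to extract a single globally converging subsequence, and one can build holomorphic counterexamples showing that the weaker version fails. It is the stronger super-regularity assumption, which a priori holds for every subsequence of $(g_n)$, combined with the quantitative control provided by $\rho$, that allows the iterative refinement above to succeed and yields the contradiction that drives the whole construction.
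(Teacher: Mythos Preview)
Your approach is essentially the same as the paper's: both build nested subsequences $(f_n^{(j)})$ with $x_j \in N(f_n^{(j)})$ and then diagonalize. The paper organizes the inductive step via two lemmas---one showing that any subsequence of a non-quasi-normal witness has an \emph{infinite} set of non-normality points (so a new $x_j\neq x_1,\dots,x_{j-1}$ is always available), the other (via equicontinuity) that a point of non-normality for $(g_n)$ can be upgraded to a point in $N(g_{n_k})$ for a suitable subsequence---whereas you fold both into a single ``key claim'' proved by contradiction.

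There is one genuine gap. Your assertion that ``super-regularity gives $\rho(x)>0$ pointwise'' is not immediate: super-regularity allows the neighborhood of convergence to depend on the chosen subsequence, while $\rho(x)>0$ demands a \emph{single} radius that works for every subsequence (equivalently, that the family $\{g_n\}$ is normal on some fixed $B(x,r)$). Bridging this requires exactly the equicontinuity argument that the paper isolates as its second lemma: if $(g_n)$ is not normal on any ball about $x$, then $(g_n)$ is not equicontinuous at $x$, so there exist $\varepsilon>0$, points $x_j\to x$, and indices $n_j\to\infty$ with $d(g_{n_j}(x_j),g_{n_j}(x))\ge\varepsilon$; hence no subsubsequence of $(g_{n_j})$ converges in any neighborhood of $x$, contradicting super-regularity. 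Once this is in place you indeed have $\rho(x)>0$, but note that this already says $(g_n)$ is normal on a fixed ball about each $x\in U$, so your key claim reduces to the standard fact that normality is a local property; the $\rho$-function and lower-semicontinuity argument, while correct, then becomes more machinery than you need.
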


\begin{lemma}\label{lem:quasinormality1}
Let $\Omega\subset \C$ be a domain and let $\FF$ be a not quasi-normal family of holomorphic functions $\Omega\to \C$.
Then there exist a sequence $(f_n)$ in $\mathcal{F}$
with the following property: for every subsequence $(f_{n_k})$, there exists an infinite set $E(f_{n_k})\subset \Omega$ such that $(f_{n_k})$ is not normal in any neighborhood of a point in $E(f_{n_k})$.
\end{lemma}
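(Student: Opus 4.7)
The approach is to unpack the definition of quasi-normality and identify the required sequence directly. By negating Definition~\ref{defn:quasinormality}, the hypothesis that $\FF$ is not quasi-normal produces a sequence $(f_n) \subset \FF$ with the property that for every finite set $Q \subset \Omega$, no subsequence of $(f_n)$ converges uniformly on compact subsets of $\Omega \setminus Q$. I would take this sequence as the one claimed by the lemma.

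Given an arbitrary subsequence $(f_{n_k})$ of $(f_n)$, I would let $E(f_{n_k})$ denote the set of points $x \in \Omega$ at which the family $\{f_{n_k}\}$ fails to be locally uniformly bounded in any neighborhood of $x$, i.e., the non-normality set of the subsequence. By definition this set satisfies the property stated in the lemma (namely, $(f_{n_k})$ is not normal in any neighborhood of any point of $E(f_{n_k})$), so the task reduces to showing $E(f_{n_k})$ is infinite. I would argue this by contradiction: assume $E(f_{n_k})$ is finite. Then on the open set $U := \Omega \setminus E(f_{n_k})$ the family $\{f_{n_k}\}$ is locally uniformly bounded, hence normal on $U$ by Montel's theorem. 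Exhausting $U$ by an increasing sequence of compact sets and using a standard diagonal extraction, I would obtain a sub-subsequence $(f_{n_{k_l}})$ converging uniformly on every compact subset of $U$. Since this is itself a subsequence of the original $(f_n)$ converging uniformly on compacts of $\Omega \setminus Q$ with $Q = E(f_{n_k})$ finite, this contradicts the defining property of $(f_n)$.

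The argument is essentially a direct translation of the definition, and the only analytic input is Montel's theorem guaranteeing normality from local uniform boundedness together with the routine diagonal extraction. The main point requiring care is the quantifier structure of the definition of quasi-normality and the observation that every sub-subsequence of $(f_n)$ is still a subsequence of $(f_n)$, so that the non-quasi-normality property is inherited and can be used to derive the contradiction above.
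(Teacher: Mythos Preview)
Your overall strategy matches the paper's exactly: negate quasi-normality to obtain the sequence $(f_n)$, define $E(f_{n_k})$ as a ``bad'' set for the subsequence, and derive a contradiction from finiteness of $E$ by extracting a subsequence converging on $\Omega\setminus E$. The issue is in your definition of the bad set.

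You define $E(f_{n_k})$ as the set of points where the family fails to be locally uniformly bounded, and then assert this coincides with the non-normality set. In the setting of this paper normality is taken in the spherical sense (subsequential limits may be $\infty$; see the use of the spherical metric in Lemma~\ref{lem:quasinormality2} and the treatment of the limit $g=\infty$ in Lemma~\ref{lem:tending to infinity near 0}). With that convention, ``not locally uniformly bounded'' does \emph{not} imply ``not normal'': the constant sequence $f_n\equiv n$ is unbounded near every point yet perfectly normal. Thus your $E$ is in general a strict superset of the true non-normality set $E'$, and the inclusion goes the wrong way: proving $E$ infinite does not yield $E'$ infinite, so the conclusion of the lemma (non-normality at every point of the exhibited infinite set) does not follow.

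The fix is minimal and recovers the paper's proof verbatim: define $E(f_{n_k})$ directly as the set of points $x$ such that $(f_{n_k})$ is not normal on any neighborhood of $x$. If this set were finite, then at every point of $\Omega\setminus E(f_{n_k})$ the sequence is locally normal; since normality is a local property, the sequence is normal on all of $\Omega\setminus E(f_{n_k})$, and a diagonal extraction yields the contradiction. Montel's theorem is not needed.
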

\begin{proof}
Assume  $\mathcal{F}$ is not quasi-normal. Then there exists a sequence  $(f_n)$ in $\mathcal{F}$ such that for any finite set $L\subset \Omega$ and every subsequence $(f_{n_k})$ of $(f_n)$, $(f_{n_k})$  does not converge uniformly on compact subsets in $\Omega\setminus L$.
For every subsequence $(f_{n_k})$, define $E(f_{n_k})$ as the set of all points $x$ in $\Omega$ such that the sequence $(f_{n_k})$ is not normal in any neighborhood  of $x$. We just need to prove that $E(f_{n_k})$ is not a finite set. If by contradiction $E(f_{n_k})$ is a finite set, then for all points $y\in \Omega\setminus E(f_{n_k})$, the sequence $(f_{n_k})$ is locally normal around $y$. Since normality is a local property, it  follows that $(f_{n_k})$ is normal on $\Omega\setminus E(f_{n_k})$, and thus we can extract a subsequence of $(f_{n_k})$ converging on $\Omega\setminus E(f_{n_k})$, which is a contradiction.
\end{proof}
\begin{lemma}\label{lem:quasinormality2}
Let $\Omega\subset \C$ be a domain and let $x\in \Omega$.
If a sequence of holomorphic functions $(f_n\colon \Omega\to \C)$ is not normal in any neighborhood of $x$, then we can extract a subsequence $(f_{n_k})$ with the property that no subsequence of  $(f_{n_k})$  converges uniformly  in any neighborhood of $x$.
\end{lemma}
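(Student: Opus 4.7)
The plan is to invoke Marty's criterion: a family of holomorphic functions on a domain is normal, in the spherical sense (allowing limits $\equiv\infty$), if and only if the spherical derivatives $f^{\#}:=|f'|/(1+|f|^2)$ are locally uniformly bounded. Setting $B_m:=B(x,1/m)\cap\Omega$, the non-normality of $(f_n)$ on each $B_m$ supplies points $y\in B_m$ and indices $n$ for which $f_n^{\#}(y)$ is arbitrarily large. Because each individual $f_n^{\#}$ is continuous (hence bounded on the compact set $\overline{B_m}$), removing any initial segment of $(f_n)$ preserves non-normality of the tail on $B_m$. A straightforward induction then yields strictly increasing indices $n_1<n_2<\cdots$ and points $y_k\in B_k$ with
\[
f_{n_k}^{\#}(y_k)>k.
\]

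I claim this subsequence $(f_{n_k})$ has the required property. Suppose, for contradiction, that some sub-subsequence $(f_{n_{k_j}})$ converges uniformly on an open neighborhood $V$ of $x$; its limit is either a holomorphic function on $V$ or identically $\infty$. Fix $r>0$ with $\overline{B(x,r)}\subset V\cap\Omega$. If the limit is holomorphic, Cauchy's estimates bound $|f_{n_{k_j}}'|$ uniformly on $B(x,r/2)$, and the pointwise inequality $f^{\#}\leq|f'|$ then bounds $f_{n_{k_j}}^{\#}$ there. If the limit is $\infty$, then $1/f_{n_{k_j}}\to 0$ uniformly on $B(x,r)$; Cauchy's estimates applied to $1/f_{n_{k_j}}$ bound its derivative on $B(x,r/2)$, and the elementary invariance $(1/f)^{\#}=f^{\#}$ again gives a uniform bound on $f_{n_{k_j}}^{\#}$ over $B(x,r/2)$. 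In either case, since $y_{k_j}\to x$ the points $y_{k_j}$ eventually lie in $B(x,r/2)$, contradicting $f_{n_{k_j}}^{\#}(y_{k_j})>k_j\to\infty$.

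Apart from the minor wrinkle of permitting $\infty$-valued uniform limits — handled via the spherical invariance $(1/f)^{\#}=f^{\#}$ — the argument is a routine diagonal extraction followed by a Cauchy-estimate contradiction, so I do not anticipate a serious obstacle.
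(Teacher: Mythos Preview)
Your proof is correct and follows the same overall structure as the paper's: extract a subsequence witnessing the failure of a normality criterion at $x$, then derive a contradiction from any uniformly convergent sub-subsequence. The only difference is the criterion used---the paper invokes spherical equicontinuity (Arzel\`a--Ascoli) directly, selecting $x_j\to x$ and $n_j$ with $d(f_{n_j}(x_j),f_{n_j}(x))\ge\varepsilon$, whereas you use Marty's theorem and blow up the spherical derivative; both are standard and equally short in spirit, though your write-up is more explicit about the case split between finite and infinite limits.
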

\begin{proof}
 Recall that a sequence $(f_n)$ is normal if and only if it is equicontinuous with respect to the spherical metric on the Riemann sphere. Since $(f_n)$ is not normal on any neighborhood of $x$, it follows that $(f_n)$ is not equicontinuous in $x$.
This means that there exists a constant $\varepsilon>0$ such that for all $j$ there exist $|x_j-x|< 1/j$ and an integer $n_j$ such that
$$d(f_{n_j}(x_j), f_{n_j}(x))\geq \varepsilon.$$ But then the sequence $(f_{n_j})$ cannot have a subsequence converging uniformly in any neighborhood of $x$.
\end{proof}

\begin{proof}[Proof of Proposition~\ref{prop:non normal subsequences}]
Let $(f_n)$ be the sequence given by  Lemma~\ref{lem:quasinormality1}, and $E(f_n)$ be the associated non-normality infinite set.
Choose $x_1\in E(f_n)$. By  Lemma~\ref{lem:quasinormality2} there exists a subsequence $(f_{n_1(h)})$ of $(f_n)$ such that every subsequence of $(f_{n_1(h)})$ does not converge in any neighborhood of $x_1$.

Let now  $E(f_{n_1(h)}))$ be the infinite set given by  Lemma~\ref{lem:quasinormality1} for the subsequence $(f_{n_1(h)})$. Choose $x_2\in E((f_{n_1(h)}))$ different from $x_1$.
By Lemma~\ref{lem:quasinormality2} there exists a subsequence $(f_{n_2(h)})$ such that every subsequence of $(f_{n_2(h)})$ does not converge uniformly in any neighborhood of the points  $x_1,x_2$. By induction we obtain
an infinite set $Q:=(x_j)_{j\geq 1}$ and a family $((f_{n_k(h)}))_{k\geq 1}$ of nested subsequences of $(f_n)$
such that for all $k\geq 1$ no subsequence of  $(f_{n_k(h)})$ converges uniformly in any neighborhood of the points $x_1, \dots, x_k$. The diagonal subsequence $(g_h:=f_{n_h}(h))$ gives the result.
\end{proof}

\section{Proof of Theorem~\ref{thm:Henon Entropy}}\label{section:proof}

Let $F(z,w)=(f(z)-\delta w,z)$ be a transcendental H\'enon map.
For $n\in\N$ and $z\in\C$ let us define
$$
f_n(z):=\frac{f(nz)}{n}.
$$
Observe that for each $n,$ $f$ and $f_n$ are topologically conjugate via the map $z\mapsto nz$, so they have the same entropy.
Analogously, the maps $F_n(z,w)=(f_n(z)-\delta w,z)$ are topologically conjugate to $F$ and hence  have the same entropy as $F$.

\begin{example}
For $f(z) = e^z$ the functions $f_n$ diverge on the right half plane, and converge to $0$ on the left half plane, thus $(f_n)$ is not quasi-normal in any neighborhood of any point on the imaginary axis.

Consider  a  sequence of complex numbers $(a_\ell)$ with $|a_\ell|\rightarrow \infty$ and  $|a_{\ell+1}/a_\ell| \rightarrow \infty$, and define
$$
f(z) = \prod_{\ell\ge 1} (1 - z/a_\ell).
$$
Since the infinite product converges for every $z$ by choice of the $a_\ell$,
and since it is not a polynomial,  $f$ is a transcendental entire function.   Notice that $f_n(0)\ra0$, that the zeros of $f$ are $\{a_\ell\}_{\ell\geq1}$, and that the zeros of $f_n$ are $Z_n:=\{a_\ell/n\}_{\ell\geq1}$.

Given any sequence in $(f_n)$ we can find a subsequence $(f_{n_j})$ for which the sets of zeros $Z_{n_j}=\{a_\ell / n_j\}_{\ell \ge 1}$ converge as $n_j\ra\infty$ to the set $Z_\infty$, which is either $\{0, \infty\}$ or  $\{0, \infty, q\}$ for some $q\in \mathbb C\setminus \{0\}$, in terms of the Hausdorff metric on the Riemann sphere.

Indeed, if a sequence of zeros $a_{\ell_j}/n_j$ accumulates on a point $q\neq0, \infty$, then up to passing to a subsequence we may assume that $a_{\ell_j}/n_j \ra q$ as $j\ra\infty$. Since $|a_{j+1}/a_j| \rightarrow \infty$ it follows that as $j\to\infty$  we have that  $a_{i_j}/n_j$ tends to $0$ whenever $i_j<\ell_j $, and converges to $\infty$ whenever $i_j>\ell_j$.

Let us work with the case $Z_\infty=\{0, \infty, q\}$. Write $f_{n_j}(z)$ as a product of three terms as follows:
$$
f_{n_j}(z) = \left(\frac{1}{n_j} \prod_{\ell<\ell_j} \left(1 - \frac{z n_j}{a_\ell}\right)\right) \left(1 - \frac{z n_j}{a_{\ell_j}}\right)\left(\prod_{\ell> \ell_j} \left(1 - \frac{z n_j}{a_\ell}\right)\right).
$$
Observe that on any compact subset of $\mathbb C  \setminus \{0,q\}$ the second of these terms converges uniformly to the non-zero function $1-z/q$, while the third term converges uniformly to the constant function $1$. The first term diverges uniformly, proving quasi-normality. In the case $Z_\infty = \{0, \infty\}$ one writes $f_{n_j}(z)$ as a product of two terms, similarly obtaining locally uniform divergence on $\mathbb C \setminus \{0\}$.
\end{example}

The proof of Theorem ~\ref{thm:Henon Entropy} is divided into two cases, with different proofs,  depending on whether $\FF:=(f_n)$ is a quasi-normal family or not. As mentioned in the introduction, the outline of our proof follows Wendt's proof \cite{WendtDiploma,Wendt,WendtMan} for  the one-dimensional case.

\subsection{Quasinormal Case}
In this subsection we prove the following result:
\begin{thm}\label{thm:qn2D}Let $F:(z,w) \mapsto (f(z) - \delta w, z)$ be a transcendental H\'enon map, and suppose that the transcendental functions defined by $f_n(z) = f(nz)/n$ form a quasi-normal family. Then $F$ has infinite entropy.
\end{thm}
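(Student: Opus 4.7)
Since $F$ is topologically conjugate to each $F_n$ via the linear change of coordinates $(z,w)\mapsto (nz,nw)$, all the maps $F_n$ share the same topological entropy as $F$. It therefore suffices to exhibit, for every integer $d\geq 2$, some index $k$ and a forward-$F_{n_k}$-invariant compact subset of $\C^2$ on which the topological entropy is at least $\log d$. I would produce such a set from a H\'enon-like structure of degree $d$ in the sense of Dujardin~\cite{Dujardin04}, built by lifting a one-dimensional polynomial-like structure for $f_{n_k}$ of the same degree. Quasi-normality of $\FF=(f_n)$ is used, after passing to a subsequence, to produce a finite set $Q\subset\C$ such that $(f_{n_k})$ converges locally uniformly on $\C\setminus Q$ either to a holomorphic function or uniformly to infinity; at each $q\in Q$ the family fails to be normal, and since $f$ is transcendental this non-normality is witnessed by preimages piling up rather than by mere boundedness failures.

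\emph{Polynomial-like step (after Wendt).} For each $d\in\N$ I would construct topological disks $U_d\Subset V_d\subset \C$ with $q\in U_d$ and an index $k_0(d)$ such that for all $k\geq k_0(d)$, the preimage $U_d':=f_{n_k}^{-1}(V_d)\cap U_d$ is relatively compact in $U_d$ and $f_{n_k}\colon U_d'\to V_d$ is a proper branched cover of degree at least $d$. The recipe is to take $U_d$ small enough around a non-normality point $q\in Q$ that $f_{n_k}$ converges uniformly on $\p U_d$ to some limit (finite or $\infty$), pick $V_d$ disjoint from that limit, and apply the argument principle on $\p U_d$: the number of preimages of any $v\in V_d$ inside $U_d$ equals the winding number of $f_{n_k}(\p U_d)$ around $v$, and transcendence of $f$ lets one force this winding number to be arbitrarily large by a careful choice of $V_d$ and $k$.

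\emph{Lifting to $\C^2$.} With the polynomial-like data $f_{n_k}\colon U_d'\to V_d$ in hand, I would form the bidisk $\B_d := V_d\times V_d$ and consider the vertical-like subset
\[
\widetilde{\B}_d := \{(z,w)\in U_d'\times V_d : f_{n_k}(z)-\delta w\in V_d\}\subset\B_d.
\]
Provided $V_d$ is chosen large enough that the shear $|\delta w|$ for $w\in V_d$ is absorbed into a controlled translate of $V_d$, the restricted map $F_{n_k}\colon\widetilde{\B}_d\to\B_d$ inherits, along each vertical slice $\{w=w_0\}\cap\widetilde{\B}_d$, the polynomial-like degree of $f_{n_k}(\cdot)-\delta w_0\colon U_d'\to V_d$, which is $d$. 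This places $F_{n_k}$ within Dujardin's H\'enon-like framework of degree $d$, and Dujardin's theorem yields a forward-$F_{n_k}$-invariant compact set $\Lambda_d\subset\widetilde{\B}_d$ with $\htop(\Lambda_d,F_{n_k})\geq\log d$. Transferring through the conjugacy to an $F$-invariant compact set of the same entropy and letting $d\to\infty$ gives $\htop(\C^2,F)=\infty$.

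\emph{Main obstacle.} The delicate step is the lifting: I need to verify that $\widetilde{\B}_d$ is genuinely vertical-like in Dujardin's sense, that $F_{n_k}|_{\widetilde{\B}_d}$ has the expected branched-cover structure of degree $d$ onto $\B_d$, and that the boundary geometry fits the H\'enon-like axioms. The key geometric input is balancing the shear $\delta w$ against the expansion of $f_{n_k}$ on $\p U_d'$; since $\delta$ is fixed but the polynomial-like step allows $V_d$ to be chosen arbitrarily large compared to $U_d'$, this balance should be achievable, and the rest is the two-dimensional book-keeping that the Wendt argument in one variable does not have to contend with.
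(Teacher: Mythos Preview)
Your overall strategy---lift a one-variable polynomial-like picture for some $f_{n_k}$ to a H\'enon-like structure for $F_{n_k}$ and invoke Dujardin---is exactly the paper's. But two steps in your outline are not correct as written.

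\medskip

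\textbf{The one-variable step.} You work at an unspecified $q\in Q$ and allow the limit of $f_{n_k}$ on $\partial U_d$ to be either finite or infinite. If the limit is a \emph{finite} holomorphic function $g$, then the winding number of $f_{n_k}(\partial U_d)$ around any $v\in V_d$ converges to the winding number of $g(\partial U_d)$ around $v$, which is a fixed integer; you cannot push it to arbitrary $d$ by choosing $k$ large. The paper resolves this by singling out the specific non-normality point $q=0$: since $f_{n_k}(0)=f(0)/n_k\to 0$ while $\max_{|z|=r}|f_{n_k}(z)|\to\infty$ for every $r>0$ (transcendental growth), one gets $0\in Q$ and $f_{n_k}\to\infty$ uniformly on compact subsets of a punctured disk $\D_s\setminus\{0\}$. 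The unbounded winding number then comes from the preimages of a non-exceptional value of $f$ piling up at $0$ under the rescaling. Your sketch invokes ``transcendence of $f$'' at this point but does not supply this argument, and at a general $q\in Q$ it is not available.

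\medskip

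\textbf{The lifting step.} You take the bidisk to be $V_d\times V_d$, where $V_d$ is the \emph{target} of the polynomial-like map. Dujardin's axiom (2) requires $F_{n_k}(\partial_v\B_d)\cap\overline{\B_d}=\varnothing$, i.e.\ control of $f_{n_k}$ on $\partial V_d$, which your construction does not give. The paper instead takes the bidisk to be the small \emph{domain} disk $\overline{\D_r}\times\overline{\D_r}$ with $r<s$, so that on the vertical boundary $|f_{n_k}(z)|>(|\delta|+1)r$ for $k$ large; then $|f_{n_k}(z)-\delta w|>r$ whenever $|z|=r$ and $|w|\le r$, and all three H\'enon-like axioms follow immediately. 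With this choice the shear $\delta w$ is absorbed not by making the target large but by making the domain small relative to $|f_{n_k}|$ on its boundary---the opposite of what your ``$V_d$ large enough'' remark suggests.
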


For any $r\in\R$ let us denote by $\D_r$ the Euclidean disk of radius $r$ centered at $0$. Let $f$ be entire transcendental and let $\FF$ be the family of rescalings $f_n(z)=f(nz)/n$.
Assume that  $\FF$ is quasi-normal.
Then there is a subsequence $(f_{\nk})$ of $(f_n)$ and a finite set $Q$ such that $(f_\nk)$ converges uniformly on compact sets of $\C\setminus Q$.

\begin{lem}\label{lem:tending to infinity near 0}
The set $ Q$ contains the origin, and  there exists $0<s<1$ such that $f_{\nk}\ra\infty$ uniformly on compact subsets of $\D_s\setminus\{0\}$.
\end{lem}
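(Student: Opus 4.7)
The plan is to exploit the Taylor expansion of $f_n$ at the origin. Writing $f(z)=\sum_{j\ge 0}a_jz^j$, one has
$$f_n(z)=\sum_{j\ge 0} a_j n^{j-1} z^j,$$
so the $j$-th Taylor coefficient of $f_n$ at $0$ equals $a_j n^{j-1}$. Since $f$ is transcendental, infinitely many $a_j$ with $j\ge 2$ are nonzero, and for each such $j$ the sequence $a_j n^{j-1}$ diverges as $n\to\infty$. This rules out Euclidean uniform convergence of any subsequence $(f_\nk)$ on any neighborhood of $0$ to a holomorphic function, since Taylor coefficients of a uniformly convergent sequence must converge. This obstruction is the key tool.

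To show $0\in Q$, I argue by contradiction. If $0\notin Q$, there exists $\rho>0$ with $\overline{\D_\rho}\cap Q=\emptyset$, so $(f_\nk)$ converges uniformly on $\overline{\D_\rho}$ spherically. Since $f_\nk(0)=f(0)/\nk\to 0$, the limit is finite at $0$ and hence (the target being connected) holomorphic on $\overline{\D_\rho}$, with convergence in the Euclidean sense. This contradicts the obstruction above.

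For the second assertion, choose $0<s<1$ with $\overline{\D_s}\cap Q=\{0\}$. A standard dichotomy, proved by applying Hurwitz to the reciprocals $1/f_\nk$ (which are holomorphic near any point where the spherical limit is $\infty$, since each $f_\nk$ is entire and thus zero-free where it is spherically close to $\infty$), says that the spherical limit of $(f_\nk)$ on the connected domain $\D_s\setminus\{0\}$ is either a holomorphic function $h$ or identically $\infty$. To rule out the holomorphic alternative, pick any $0<\epsilon<s$; then $h$ is bounded on the compact circle $\{|z|=\epsilon\}$, uniform convergence gives $|f_\nk|\le M$ there for all large $k$, and the maximum principle applied to the entire function $f_\nk$ on $\overline{\D_\epsilon}$ extends the bound to the whole disk. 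Montel's theorem then yields a sub-subsequence converging uniformly on $\overline{\D_{\epsilon/2}}$ to a holomorphic function, and the Taylor obstruction applied to this sub-subsequence produces a contradiction. Hence $h\equiv\infty$, so $f_\nk\to\infty$ uniformly on compact subsets of $\D_s\setminus\{0\}$.

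The only conceptual step requiring care is the holomorphic-or-$\infty$ dichotomy; the remainder is a routine combination of the maximum principle, Montel's theorem, and the Taylor coefficient obstruction.
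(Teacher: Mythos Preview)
Your proof is correct. The key difference from the paper's argument is the mechanism you use to detect that no subsequence of $(f_{n_k})$ can converge to a finite holomorphic function near $0$: you use divergence of the Taylor coefficients $a_j n^{j-1}$ for $j\ge 2$, whereas the paper uses growth of the maximum modulus, observing that $\max_{|z|\le 1/\sqrt n}|f_n(z)| = M(\sqrt n,f)/n \to \infty$ because $M(r,f)$ grows faster than $r^2$ for transcendental $f$. This allows the paper to conclude directly that $(f_{n_k})$ is unbounded on every circle $\partial\D_r$, which immediately rules out the holomorphic alternative for the limit on $\D_s\setminus\{0\}$ without any further work. Your route instead passes the bound from the circle $\{|z|=\epsilon\}$ back to the full disk via the maximum principle, invokes Montel to extract a further subsequence, and then applies the Taylor obstruction; this is a legitimate but slightly longer path to the same contradiction. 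Both arguments rest on the same underlying fact (transcendence of $f$), just read off at the level of coefficients versus maximum modulus.
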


\begin{proof}
Observe first that for every  $r>0$, any subsequence of $(f_n)$ is unbounded in the circle $\partial\D_r$.
Indeed,  for any $n$ we have that  $f_n(\D_{1/\sqrt{n}})=f(\D_{\sqrt{n}})/n$, and the maximum modulus of a transcendental function on a disk of radius $r$ grows faster than   $r^2$.

We claim  that $(f_\nk)$ does not converge uniformly in a neighborhood of $0$, so in particular,  $0\in Q$.  Indeed, $f_\nk(0)=f(0)/\nk \ra0$ as $n_k\ra\infty$, while  $(f_\nk)$ is unbounded in any neighborhood of $0$.
Since $Q$ is finite we can find $s$ such that $f_{\nk}\ra g$ uniformly on compact subsets of $\D_s\setminus\{0\}$, with $g:\D_s\setminus\{0\}\ra \C$ or $g=\infty$.  Since  $(f_\nk)$ is unbounded in any circle $\partial\D_r$ we obtain $g=\infty$.
\end{proof}

\begin{prop}\label{prop:qn1D}
Let $ s, (f_{n_k})$  be as in Lemma~\ref{lem:tending to infinity near 0}.
Let $0<r<s$, and let $R>0$ and $m\in \N$. Then there exists $k_0\in\N$ such that for $k>k_0$ we have
 \begin{enumerate}
 \item $|f_{n_k}(z)|>R$ for every $z\in \partial \D_r$,
\item  the winding number  of the curve $f_{n_k}(\partial \D_r)$ around the origin is  larger than or equal to $m$.
  \end{enumerate}
  \end{prop}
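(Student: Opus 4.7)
Part (1) is immediate from Lemma~\ref{lem:tending to infinity near 0}: the circle $\partial\D_r$ is a compact subset of $\D_s\setminus\{0\}$, on which $f_{n_k}\to\infty$ uniformly, so $|f_{n_k}|>R$ everywhere on $\partial\D_r$ for all $k$ sufficiently large.

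For part (2), the plan is as follows. Once (1) holds, the image curve $\gamma_k:=f_{n_k}(\partial\D_r)$ is disjoint from the closed disk $\overline{\D_R}$, which is therefore contained in a single connected component of $\C\setminus\gamma_k$. Consequently the winding number of $\gamma_k$ about $w_0$ takes the same value $W_k$ for every $w_0\in\overline{\D_R}$, and in particular equals the winding number about the origin. By the argument principle applied to $f_{n_k}-w_0$, this common value equals the number of solutions in $\D_r$ (counted with multiplicity) of the equation $f_{n_k}(z)=w_0$.

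The main step is to show $W_k\to\infty$. I would exploit the freedom to choose $w_0$ depending on $k$. By Picard's Great Theorem, the transcendental entire function $f$ has at most one exceptional value, so we may fix some $w^*\in\C$ for which $f^{-1}(w^*)$ is infinite. For $k$ so large that $|w^*|/n_k<R$, set $w_0:=w^*/n_k\in\D_R$; the substitution $\zeta=n_k z$ turns the equation $f_{n_k}(z)=w_0$ on $\D_r$ into $f(\zeta)=w^*$ on $\D_{n_k r}$, whence
$$
W_k = \bigl|f^{-1}(w^*)\cap \D_{n_k r}\bigr|
$$
counted with multiplicity. Since $n_k r\to\infty$, the disks $\D_{n_k r}$ exhaust $\C$, forcing the right-hand side to infinity; hence $W_k\geq m$ for all $k$ sufficiently large.

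The point deserving comment is the adaptive choice of $w_0$. The naive choice $w_0=0$ is not enough: if $f$ has only finitely many zeros (as happens e.g. for $f=p\cdot e^g$ with $p$ a polynomial and $g$ entire), then $|f^{-1}(0)\cap\D_{n_k r}|$ is bounded in $k$ and no conclusion can be drawn by zero-counting alone. Replacing $0$ by a non-exceptional value $w^*$ and compensating via the rescaling $w_0=w^*/n_k\in\D_R$ is precisely what transports Picard's abundance of preimages "at infinity" into the target disk $\D_R$, and this is the only delicate ingredient in the argument.
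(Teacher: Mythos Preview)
Your proof is correct and follows the same essential route as the paper: both pick a non\nobreakdash-exceptional value for $f$, rescale it by $1/n_k$ to obtain a point inside $\D_R$, and use the argument principle together with the substitution $\zeta=n_k z$ to convert the winding number into a count of preimages of the non\nobreakdash-exceptional value inside the expanding disks $\D_{n_k r}$. The only difference is organizational: the paper first fixes $m$, chooses $\rho$ so that $\D_\rho$ contains $m$ preimages, and then verifies via an auxiliary set $W\subset f^{-1}(\D_M)$ that the rescaled preimages land inside $\D_r$; you bypass this by applying the argument principle directly on $\D_r$ and reading off the preimage count in $\D_{n_k r}$, which is marginally cleaner but not a genuinely different idea.
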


\begin{proof}

 (1) is an immediate consequence of Lemma~\ref{lem:tending to infinity near 0}.
We now prove (2).
Let $a\in\D_R$ be a non-exceptional point for $f$.
Fix $m\in\N$, and let $\rho=\rho(m)$ such that $a$ has at least $m$ preimages in $\D_\rho$ under  $f$. Let $M$ such that $f(\D_\rho)\Subset \D_M$.
It follows that there is  a connected component  $W$    of $f^{-1}(\D_M)$ which contains $\D_\rho$, and hence contains at least $m$ preimages of $a$ under $f$.

Let $k_0$ be large enough such that for all $k\geq k_0$ we have $M/\nk <R$, and such that (1) holds. Let $k\geq k_0$.
Denote by $W/\nk$ the set $\{z/\nk \colon z\in W\}$. Then if $z\in  W/\nk$ we have $\nk z\in W$ and hence $|f_\nk(z)|<R.$
Thus $W/n_k \subset f_\nk^{-1}(\D_R)$.
Notice that $0\in W/n_k$.  It follows by (1) that $W/n_k \subset \D_r$.

We now claim that $W/\nk$ contains at least $m$ preimages of $a_k := a/n_k$ under $f_\nk.$ Indeed $W$ contains at least $m$ preimages of $a$ under $f$, and for any such preimage $z$ we have that
$$
f_\nk(\frac{z}{\nk})=\frac{f(z)}{\nk}=a_k.
$$
Since $a_k\in \D_R$, the  result follows by the argument principle.
\end{proof}

%



Let $\Delta=\D_{r_1}\times \D_{r_2}$ be a bidisk, $\partial_v\Delta,\partial_h\Delta $ denote its vertical and horizontal boundary respectively. The following definition of H\'enon-like maps is  Definition 2.1 in \cite{Dujardin04}.

\begin{defn}[H\'enon-like map]\label{defn:henon-like}
An injective holomorphic map $H$ defined in a neighborhood of $\overline\Delta$ is called  \emph{H\'enon-like} if
\begin{enumerate}
\item $H(\Delta)\cap \Delta\neq\varnothing$;
\item $H(\partial_v(\Delta))\cap\ov{\Delta}=\varnothing$;
\item $H(\ov{\Delta})\cap\partial \Delta\subset \partial_v(\Delta) $.
\end{enumerate}
\end{defn}


Let $\pi_z, \pi_w:\C^2\ra\C$ denote the projection to the $z$ and to the $w$ axis respectively.

\begin{defn}[Degree of a H\'enon-like map]\label{dujdef}
Let $H$ be a H\'enon-like map defined in a neighborhood of  $\overline\Delta=\overline\D_{r_1}\times \overline\D_{r_2}$ and let $L_h$ be any horizontal line intersecting $\Delta$.
 Consider the holomorphic function
\begin{equation}\label{eq:proper projections}
 \pi_z\circ H: H^{-1}(\Delta)\cap \Delta\cap L_h\ra \D_{r_1}.
\end{equation}
Then by condition (3) of Definition \ref{defn:henon-like} we have that if $(z,w)\in \partial (H^{-1}(\Delta)\cap \Delta\cap L_h)$, then $H(z,w)\in \partial_v\Delta$, which means that
the function  in (\ref{eq:proper projections}) is proper, and thus a branched covering.
 By Proposition 2.3 in \cite{Dujardin04}, its degree is independent of the chosen horizontal line. This integer is the {\sl degree} of the H\'enon-like map $H$.
\end{defn}

The following theorem is proved in \cite[Theorem 3.1]{Dujardin04}.
\begin{thm}\label{thm:Henonlike entropy}
Let $H$ be a H\'enon-like map of degree $d$. The topological
entropy of $H$ is $\log d$.
\end{thm}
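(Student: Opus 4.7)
The plan is to establish two matching inequalities, $\htop(H) \geq \log d$ and $\htop(H) \leq \log d$, both exploiting the crossed-mapping structure encoded in Definitions~\ref{defn:henon-like} and \ref{dujdef}.

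For the lower bound, I realize $\log d$ as the exponential growth rate of $(n,\delta)$-separated sets inside the compact forward-invariant set $\bigcap_{k\geq 0}H^{-k}(\overline\Delta)$. The key structural fact is that $\pi_z \circ H \colon H^{-1}(\Delta)\cap \Delta \cap L_h \to \D_{r_1}$ is a proper degree-$d$ branched covering of each horizontal fiber, so for a generic point $p \in \Delta$ there are $d$ distinct preimages $q_1, \ldots, q_d$ sitting in pairwise disjoint open sets $U_1, \ldots, U_d \subset \Delta$, each mapped biholomorphically onto a common neighborhood of $p$. Iterating and using the holomorphic motion of the slices, for each word $\omega = (\omega_0,\ldots,\omega_{n-1}) \in \{1,\dots,d\}^n$ one produces a nonempty itinerary set $A_\omega := \{z : H^k(z) \in U_{\omega_k},\; 0 \leq k < n\}$. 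Picking one point from each $A_\omega$ yields an $(n,\delta)$-separated set of cardinality $d^n$, where $\delta$ is the minimum pairwise distance between the $U_i$. Therefore $\htop(H) \geq \log d$.

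For the upper bound, I would apply a Gromov--Yomdin-type volume estimate on the iterated graph
\[
\Gamma_n := \Bigl\{(z, H(z),\ldots, H^{n-1}(z)) : z \in \bigcap_{k=0}^{n-1} H^{-k}(\Delta)\Bigr\} \subset \Delta^n.
\]
The degree-$d$ crossed-mapping property bounds the degree of the projection $\Gamma_n \to \Delta$ onto the last factor by $d^n$. Combined with a Lelong-type area estimate, this gives $\operatorname{Vol}(\Gamma_n) \leq C\, d^n$ for a constant $C$ independent of $n$, and the holomorphic form of Yomdin's theorem then yields $\htop(H) \leq \log d$.

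The main obstacle is the non-compact setting: $H$ is defined only on a neighborhood of $\overline\Delta$ and is not a self-map of a compact manifold, so the standard Gromov--Yomdin statements and the usual constructions of measures of maximal entropy do not apply verbatim. The remedy is to pass to the completely invariant compact set $\Lambda := \bigcap_{n\in\Z} H^n(\overline\Delta)$, which carries all of the topological entropy by the lemma in Section~\ref{section:prelim}, and to exploit condition (3) of Definition~\ref{defn:henon-like} to ensure that iterates cannot leak out through the horizontal boundary. A cleaner route, and the one actually pursued in \cite{Dujardin04}, is pluripotential theoretic: construct closed positive $(1,1)$-currents $T^{\pm}$ satisfying $H^\ast T^+ = d\, T^+$ and $H_\ast T^- = d\, T^-$, form the intersection measure $\mu := T^+\wedge T^-$, and identify $\htop(H) = h_\mu(H) = \log d$ via the variational principle combined with a direct computation of the measure-theoretic entropy.
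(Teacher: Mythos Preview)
The paper does not prove this theorem at all: it is quoted verbatim from \cite[Theorem~3.1]{Dujardin04} and used as a black box. So there is no ``paper's own proof'' to compare your proposal against; the relevant comparison is with Dujardin's argument, which (as you yourself note in your last paragraph) is pluripotential-theoretic, building currents $T^{\pm}$ with the right functorial behavior and identifying the entropy via the variational principle.

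That said, your sketch of the lower bound contains a genuine slip. A H\'enon-like map $H$ is by Definition~\ref{defn:henon-like} \emph{injective}, so a point $p\in\Delta$ cannot have $d$ distinct $H$-preimages, and $H$ certainly cannot map $d$ disjoint open sets $U_1,\ldots,U_d$ biholomorphically onto a common neighborhood of $p$. What has degree $d$ is the proper map $\pi_z\circ H$ restricted to a horizontal slice, not $H$ itself. The correct combinatorial picture is that $H^{-1}(\Delta)\cap\Delta$ consists (after a small perturbation if needed) of $d$ ``vertical-like'' sub-bidisks, each mapped by $H$ across $\Delta$ in a horizontally stretching way; the horseshoe-type itinerary argument then goes through on these strips, not on preimages of a single point. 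As written, your iteration scheme does not make sense because the objects you iterate are not well defined. The upper-bound paragraph is closer in spirit to what is actually done, but it is only an outline; the substantive work (controlling volumes of the iterated graph, or equivalently the mass of pull-backs of currents, in a setting without a compact ambient manifold) is precisely the content of \cite{Dujardin04} and is not supplied here.
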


\begin{lemma}\label{polynomial->henon}

Let $f$ be a holomorphic function defined in a neighborhood of $\overline{\D_r}$, let $\delta \neq 0$, and suppose that $|f(z)| > (|\delta|+1)\cdot r$ whenever $|z| = r$. Assume that the winding number of the curve  $f(\partial\D_r)$  around the origin  is $d \ge 1$. Then the map $F:(z,w) \mapsto (f(z) - \delta w, z)$ is a H\'enon-like map of degree $d$ on $\overline\Delta = \overline\D_r \times \overline\D_r$.
\end{lemma}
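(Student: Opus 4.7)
The plan is to verify Dujardin's three H\'enon-like conditions together with injectivity on $\overline{\Delta}=\overline{\D_r}\times\overline{\D_r}$, and then compute the mapping degree directly from Definition~\ref{dujdef}. Injectivity is automatic for any H\'enon-type map with $\delta\ne 0$ since $F$ admits the explicit inverse $(u,v)\mapsto(v,(f(v)-u)/\delta)$, so this requires no separate argument.

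I would first dispatch condition (2), that $F(\partial_v\Delta)\cap\overline{\Delta}=\varnothing$. The hypothesis $|f(z)|>(|\delta|+1)r$ on $\partial\D_r$ gives, for $(z,w)$ with $|z|=r$ and $|w|\le r$, the estimate
\[
|f(z)-\delta w|\ \ge\ |f(z)|-|\delta||w|\ >\ (|\delta|+1)r-|\delta|r\ =\ r,
\]
so the first coordinate of $F(z,w)$ lies outside $\overline{\D_r}$. Condition (3) then follows: if $(z,w)\in\overline{\Delta}$ has $F(z,w)\in\partial\Delta$, the second coordinate equals $z$ and so satisfies $|z|\le r$; the case $|z|=r$ is ruled out by (2), hence $|z|<r$, which forces $F(z,w)$ to exit through the first coordinate and land in $\partial\D_r\times\D_r\subset\partial_v\Delta$. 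For condition (1), the winding-number hypothesis and the argument principle force $f$ to have at least $d\ge 1$ zeros in $\D_r$; for any such zero $z_0$, the point $(z_0,0)\in\Delta$ maps to $(0,z_0)\in\Delta$, so $F(\Delta)\cap\Delta\ne\varnothing$.

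The main step is the degree computation. Following Definition~\ref{dujdef}, I would fix a horizontal slice $L_h=\{w=w_0\}$ with $|w_0|<r$ and analyze the proper branched cover $z\mapsto g(z):=f(z)-\delta w_0$ from $\{z\in\D_r:g(z)\in\D_r\}$ onto $\D_r$. Its degree equals the number of preimages (with multiplicity) of a generic $a\in\D_r$, i.e., the number of zeros of $f(z)-(\delta w_0+a)$ in $\D_r$. Since $|\delta w_0+a|\le|\delta||w_0|+|a|<(|\delta|+1)r<|f(z)|$ on $\partial\D_r$, Rouch\'e's theorem identifies this count with the number of zeros of $f$ in $\D_r$, which by the argument principle and the winding-number hypothesis is exactly $d$. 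I do not expect any serious obstacle: the whole proof hinges on the single modulus inequality $|f(z)|>(|\delta|+1)r$ on $\partial\D_r$, which at once yields both the vertical boundary condition (2) and the Rouch\'e estimate driving the degree calculation; the only minor care required is choosing $|w_0|<r$ so that the translation $\delta w_0+a$ stays inside the disk of radius $(|\delta|+1)r$ where the winding number of $f(\partial\D_r)$ is constant.
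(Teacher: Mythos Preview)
Your proof is correct and follows essentially the same route as the paper's: the same estimate $|f(z)-\delta w|>r$ on $\partial_v\Delta$ yields (2), (3) follows since the second coordinate of $F$ is $z$, (1) comes from a zero of $f$, and the degree is read off via the argument principle/Rouch\'e. The only cosmetic difference is that the paper computes the degree on the particular slice $L_0=\{w=0\}$ with target $0$, whereas you carry out the Rouch\'e estimate for a general slice $\{w=w_0\}$ and generic target $a$; both yield $d$ for the same reason.
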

\begin{proof}
We check the three properties in Definition \ref{defn:henon-like}. The estimate $|f(z)| > (|\delta|+1)\cdot r$ gives that $|f(z) - \delta w| > r$ for all $(z,w) \in \partial_v\Delta$, which implies property (2). The formula for $F$ therefore implies that $F(\overline \Delta)$ cannot intersect $\partial_h\Delta$, giving property (3).  Since $f(\partial \D_r)$  winds around $0$ exactly $d\geq1$ times, $0$ has at least one preimage $a\in\D_r$. Hence $F(a,0)=(0,a)\in \Delta$ which gives Property (1).

We now show that $F$ has degree $d$ on $\overline \Delta$. By Definition \ref{dujdef} it is enough   to  show that  $0\in\D_r$  has   $d$ preimages counted with multiplicity in $ F^{-1}(\Delta)\cap \Delta\cap L_0$ under $\pi_z\circ F$, where $L_0$ is the horizontal line passing through $0$. It is easy to see that these points coincide with the preimages in $\D_r$ of the origin under the function $f$, and the result follows by the argument principle since the curve $f(\partial\D_r)$ winds $d$ times around $0$.
\end{proof}

\begin{proof}[Proof of Theorem~\ref{thm:qn2D}]
Recall that
$
F_n(z,w):=(f_n(z)- \delta w, z),
$
and that $F_n$ is topologically conjugate to $F$ for all $n\geq 0$.

Fix $m\in \N$. Let  $s, (f_\nk)$  be as in Lemma~\ref{lem:tending to infinity near 0} and fix $r<s, R>(|\delta|+1)r$.
Let $k_0$ be given by Proposition  ~\ref{prop:qn1D}. Then, if $k\geq k_0$, it follows by Lemma~\ref{polynomial->henon} that
$F_{n_k}$ is H\'enon-like of degree at least $m$ on the bidisk $\overline\D_r\times\overline\D_r$.
By  Theorem~\ref{thm:Henonlike entropy} we have that the entropy of $F_{n_k}$ is larger than or equal to $\log m$, and by topological invariance the same holds for the map $F$.
\end{proof}
\subsection{Non Quasinormal Case}
We will now prove the following:
\begin{theorem}\label{thm:non-quasi-normal}
Let $F:(z,w) \mapsto (f(z) - \delta w, z)$ be a transcendental H\'enon map, and suppose that the transcendental functions defined by $f_n(z) = f(nz)/n$ do not form a quasi-normal family. Then $F$ has infinite entropy.
\end{theorem}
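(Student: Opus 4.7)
The plan is to adapt Wendt's one-dimensional argument to two dimensions. Fix $N\in\N$; we aim to show $\htop(F)\ge\log(N-2)$, from which infinite entropy follows by the arbitrariness of $N$. By Proposition~\ref{prop:non normal subsequences} we extract a sequence $(f_n)\subset\FF$ and an infinite set $Q=(x_j)_{j\ge1}$ at which no subsequence of $(f_n)$ converges in any neighborhood. I would pick $x_1,\ldots,x_N\in Q$ in \emph{generic position}, so that the $N^2$ translates $x_j+\delta x_l$ are pairwise distinct, and place a small closed disk $D_j$ around each $x_j$. Around each translate $x_j+\delta x_l$ we place a bounded closed Jordan disk $\widetilde{D}_j^{(l)}\supset D_j+\delta D_l$, choosing the radii small enough that the $\widetilde{D}_j^{(l)}$ are pairwise disjoint.

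Applying Corollary~\ref{cor:3 island} to the non-normal family $(f_n)$ on a small neighborhood of each $x_i$, with the $N^2$ bounded Jordan targets $\{\widetilde{D}_j^{(l)}\}$, yields for each source $i$ an exceptional set $\mathcal{E}_i\subset\{1,\ldots,N\}^2$ with $|\mathcal{E}_i|\le 2$ such that, for every \emph{admissible} triple $(i,l,j)$ (meaning $(j,l)\notin\mathcal{E}_i$), some element of the sequence admits a univalent preimage of $\widetilde{D}_j^{(l)}$ near $x_i$. A diagonal extraction, exploiting that every subsequence of $(f_n)$ is still non-normal at each $x_i$, then produces a single index $n=n_k$ for which simultaneous univalent branches
$$
\psi^{(l)}_{ij}\colon \widetilde{D}_j^{(l)}\to V^{(l)}_{ij}\subset D_i,\qquad f_n\circ\psi^{(l)}_{ij}=\mathrm{id},
$$
exist for every admissible triple (after possibly shrinking the $D_i$ around $x_i$ once $n_k$ is fixed so as to capture the $V^{(l)}_{ij}$).

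These branches furnish a two-dimensional Markov structure for the conjugate map $F_n(z,w)=(f_n(z)-\delta w,z)$. The crucial computation is: for any admissible triple $(i,l,j)$ and any $(v,w)\in D_j\times D_l$, one has $v+\delta w\in D_j+\delta D_l\subset\widetilde{D}_j^{(l)}$, so setting $z:=\psi^{(l)}_{ij}(v+\delta w)\in D_i$ gives $F_n(z,w)=(v,z)\in D_j\times D_i$. Hence for any one-sided admissible itinerary $\underline i=(i_{-1},i_0,i_1,\ldots)$—meaning $(i_{s+1},i_{s-1})\notin\mathcal{E}_{i_s}$ for all $s\ge0$—a standard backward iteration using the contraction property of the $\psi^{(l)}_{ij}$ (which, being inverses of $f_n$ on large Jordan disks, are hyperbolic contractions into $D_i$) produces a non-empty compact set of points $(z_0,w_0)\in D_{i_0}\times D_{i_{-1}}$ whose $F_n$-orbit has the given itinerary. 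The union $K$ over all admissible itineraries is compact and forward-invariant; the dynamics of $F_n$ on $K$ is semi-conjugate to a subshift of finite type containing the full shift on $N-2$ symbols (at most two successors are forbidden for each current--predecessor pair). Therefore $\htop(F_n|_K)\ge\log(N-2)$, and by conjugacy invariance of topological entropy $\htop(F)\ge\log(N-2)$.

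The principal obstacle I anticipate is the simultaneous-$n_k$ step: Corollary~\ref{cor:3 island} a priori only guarantees that, for each admissible triple, \emph{some} $f_n$ realises the univalent preimage, and a finite intersection of infinite subsets of $\N$ may fail to be non-empty. A careful refinement is needed—for instance, showing via repeated application of Corollary~\ref{cor:3 island} to cofinite subfamilies of $(f_n)$ (which remain non-normal by the construction of $Q$) that the set of valid $n$ for each triple is in fact cofinite—to secure the diagonal extraction. A secondary delicate point is the genericity choice of the $x_j$'s together with the compatible nested configuration $D_j+\delta D_l\subset\widetilde{D}_j^{(l)}$ with the enlarged targets pairwise disjoint.
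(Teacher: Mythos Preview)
Your strategy is essentially the paper's: extract the non-normality points $x_j$, use Ahlfors' three-islands to produce univalent inverse branches of $f_n$ landing near each $x_i$, and build a subshift for the conjugate map $F_n$ by tracking the $z$-coordinate through the boxes $D_{i}\times D_{\ell}$. Two points deserve comment.

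First, the ``principal obstacle'' you flag is not one. The paper's Lemma~\ref{thefunction} obtains a \emph{single} index $n_h$ by the following pigeonhole: if for every $n_h$ some triple of targets $\widetilde D^{(l_1)}_{j_1},\widetilde D^{(l_2)}_{j_2},\widetilde D^{(l_3)}_{j_3}$ failed to have univalent preimages in some $\D_r(x_i)$, then (finitely many choices of $i$ and of the three targets) some fixed configuration recurs along a subsequence $(f_{m_h})$; by Corollary~\ref{cor:3 island} that subsequence is normal on $\D_r(x_i)$, contradicting the choice of $Q$. No diagonal extraction or cofiniteness argument is needed.

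Second, your genericity step has a genuine snag. You require the $N^2$ disks $\widetilde D_j^{(l)}$ around $x_j+\delta x_l$ to be pairwise disjoint, but for $\delta=-1$ the diagonal translates $x_j+\delta x_j$ are all $0$, so this is impossible regardless of how you choose the $x_j$ from $Q$. The paper avoids this entirely: it applies the three-islands theorem for each \emph{fixed} $\ell$ to the $k$ disks $\D_R(x_j+\delta x_\ell)$, $j=1,\dots,k$, which are translates of the disjoint disks $\D_R(x_j)$ and hence automatically disjoint. The exceptional set then has size $\le 2$ for each pair $(i,\ell)$ rather than for each $i$; the entropy bound $\log(k-2)$ is the same. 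The subsequent Rouch\'e step (Lemma~\ref{lem:transitive disks}) is exactly your inclusion $D_j+\delta D_\ell\subset\widetilde D_j^{(\ell)}$, phrased as $|\delta|r<R-r$, and the existence of orbits with prescribed itinerary is obtained via nested graphs over $\D_r(x_i)$ rather than by invoking contraction of the $\psi$'s.
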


\subsubsection{Proof of Theorem~\ref{thm:non-quasi-normal}}\label{Sect:proof non qn}

Assume that the family $(f_n)$ is not quasi-normal.
Let $(f_{n_h})$ be the subsequence of $(f_n)$ given by Proposition~\ref{prop:non normal subsequences} and let $Q=(x_j)_{j\geq 1}$ be the associated infinite set.
Fix $k\geq 1$. Let $R>0$ be such that the closures of the disks $\D_R(x_j)$, for $j = 1, \ldots, k$ are pairwise disjoint.  Next define $0<r<R$ such that $|\delta| r < R-r$.
Recall that  no subsequence of $(f_{n_h})$ is normal in any of the $k$ disks  $\D_r(x_j),$ $j=1,\dots, k$.
\begin{lemma}\label{thefunction}
For a given $n_h$, and    for    $i,\ell\in\{1, \ldots , k\}$   let
 $$
 J(i,\ell):=\{j\in \{1,\dots, k\}: \text{$\D_R(x_j + \delta x_\ell)$ admits a biholomorphic preimage under $f_{n_h}$ in  $\D_r(x_i)$}\}.
 $$
 Then there exists $n_h$ such that
 $\, \#( J(i,\ell))\geq k-2$ for every   $i,\ell\in\{ 1,\ldots,k\}$.
\end{lemma}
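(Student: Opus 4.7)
The plan is to reason by contradiction using a double pigeonhole argument together with Corollary~\ref{cor:3 island}. Suppose no $n_h$ satisfies the conclusion; then for every $h$ there is a pair $(i,\ell)\in\{1,\ldots,k\}^2$ witnessing failure, i.e.\ at least three indices $j$ for which $\D_R(x_j+\delta x_\ell)$ admits no biholomorphic preimage under $f_{n_h}$ inside $\D_r(x_i)$.

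Since there are only finitely many pairs $(i,\ell)$ and only finitely many three-element subsets $S\subset\{1,\ldots,k\}$, pigeonhole lets me extract a single pair $(i_0,\ell_0)$, a single three-element subset $S$, and an infinite set $H'\subset\N$ such that for every $h\in H'$ and every $j\in S$, the disk $\D_R(x_j+\delta x_{\ell_0})$ has no univalent preimage under $f_{n_h}$ in $\D_r(x_{i_0})$. Next, I would observe that the subfamily $\{f_{n_h}\colon h\in H'\}$ restricted to $\D_r(x_{i_0})$ is not normal: Proposition~\ref{prop:non normal subsequences} guarantees that no subsequence of the original $(f_{n_h})$ converges uniformly on any neighborhood of $x_{i_0}$, and this property passes to $H'$.

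To conclude, the three domains $\D_R(x_j+\delta x_{\ell_0})$, $j\in S$, are translates by the fixed vector $\delta x_{\ell_0}$ of the pairwise disjoint disks $\D_R(x_j)$, hence are bounded Jordan domains with pairwise disjoint closures. Applying Corollary~\ref{cor:3 island} to this non-normal family on $D:=\D_r(x_{i_0})$ with these three islands, at most two of them can fail to admit a univalent preimage under some member of the family; but by construction all three fail, contradicting the corollary.

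The main obstacle is the mismatch of quantifiers between Corollary~\ref{cor:3 island}, which asserts the existence of \emph{some} function in the family with a univalent preimage of $D_j$, and the target statement, which requires a \emph{single} $f_{n_h}$ to work simultaneously for all pairs $(i,\ell)$. Rather than inductively extracting subsequences to build such an $n_h$, I would bypass the mismatch via the pigeonhole-plus-contradiction route above: once three simultaneously failing indices are isolated for a fixed pair $(i_0,\ell_0)$, the three-islands bound immediately clashes with non-normality.
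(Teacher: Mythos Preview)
Your proof is correct and follows essentially the same route as the paper's: assume the conclusion fails for every $n_h$, pigeonhole to fix a single pair $(i,\ell)$ and a single triple of ``bad'' indices along an infinite subsequence, and then invoke the three-island version of Ahlfors' theorem to force normality of that subsequence on $\D_r(x_i)$, contradicting the non-normality property guaranteed by Proposition~\ref{prop:non normal subsequences}. You are slightly more explicit than the paper in noting that the translated disks $\D_R(x_j+\delta x_\ell)$ inherit pairwise disjoint closures and in articulating the quantifier issue, but the argument is the same.
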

\begin{proof}
Assume by contradiction that this is not the case. Then for all $n_h$ there exist $i,\ell\in \{1,\dots, k\}$ and $3$ distinct values  $j_1, j_2, j_3\in 1,\dots, k$ such that
 the disks $\D_R(x_{j_1} + \delta x_\ell),\D_R(x_{j_2} + \delta x_\ell), \D_R(x_{j_3 }+ \delta x_\ell)$ do not admit biholomorphic preimages via $f_{n_h}$ in the disk $\D_r(x_i)$. It follows that we can find a subsequence  $(f_{m_h})$ with the following property: there exist $i,\ell\in 1,\dots, k$ and $3$ distinct values  $j_1, j_2, j_3\in\{ 1,\dots, k\}$ such that for all $ m_h$ the disks $\D_R(x_{j_1} + \delta x_\ell),\D_R(x_{j_2} + \delta x_\ell), \D_R(x_{j_3} + \delta x_\ell)$ do not admit biholomorphic preimages via $f_{m_h}$ in the disk $\D_r(x_i)$. By Ahlfors five islands Theorem (see Corollary~\ref{cor:3 island}) $(f_{m_h})$ is normal in $\D_r(x_i)$, which gives a contradiction.
\end{proof}

In what follows we denote the map $f_{n_h}$ given by the previous lemma simply as $f_n$.
We will consider the dynamics of the H\'enon map $F_n(z,w) := (f_n(z) - \delta w, z)$, which is linearly conjugate to $F$.

\begin{definition}\label{defn:disks}
Let $i,\ell$ both lie in $\{1, \dots, k\}$.
A holomorphic disk $D$ is called a $(i,\ell)$-disk if
\begin{itemize}
\item it is a holomorphic graph over $\D_r(x_i)$, that is $D$ can be parametrized as $(z,w(z))$ with $w(z)$ holomorphic in $\D_r(x_i)$;
\item $\pi_w(D)\subset \D_r(x_\ell)$, where $\pi_w$ is the projection to the second coordinate.
\end{itemize}
\end{definition}
\begin{lemma}\label{lem:transitive disks}
Let $i,\ell\in\{1, \dots, k\}$. Then for all $j\in J(i,\ell)$
and for any $(i,\ell)$-disk $D$ there exists a holomorphic disk $V \subset D$ for which $F_n(V)$ is a $(j,i)$-disk.
\end{lemma}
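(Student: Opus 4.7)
The plan is to parametrize $D$ as a graph $\{(z, w(z)) : z \in \D_r(x_i)\}$ with $w\colon \D_r(x_i) \to \D_r(x_\ell)$ holomorphic, extract from $j \in J(i,\ell)$ a simply-connected open set $U \subset \D_r(x_i)$ such that $f_n\colon U \to \D_R(x_j + \delta x_\ell)$ is a biholomorphism, and then build $V \subset D$ by solving, for each $\zeta \in \D_r(x_j)$, the equation
$$
\phi(z) := f_n(z) - \delta w(z) = \zeta
$$
for a unique $z = \psi(\zeta) \in U$. Setting $V := \{(\psi(\zeta), w(\psi(\zeta))) : \zeta \in \D_r(x_j)\}\subset D$ and applying $F_n$ then yields $F_n(V) = \{(\zeta, \psi(\zeta)) : \zeta \in \D_r(x_j)\}$, which is a graph over $\D_r(x_j)$ with second coordinate $\psi(\zeta) \in U \subset \D_r(x_i)$; this is exactly a $(j,i)$-disk.

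The core analytic step is a Rouch\'e argument on $\partial U$, on which $|f_n(z) - (x_j + \delta x_\ell)| = R$. For any $\zeta \in \D_r(x_j)$ I write
$$
\phi(z) - \zeta = \bigl(f_n(z) - x_j - \delta x_\ell\bigr) + \delta(x_\ell - w(z)) + (x_j - \zeta).
$$
On $\partial U$ the first summand has modulus exactly $R$, whereas
$$
|\delta(x_\ell - w(z)) + (x_j - \zeta)| < |\delta|\,r + r < R,
$$
using $w(z) \in \D_r(x_\ell)$, $\zeta \in \D_r(x_j)$, and the calibration $|\delta|r < R - r$ that was arranged at the start of Section~\ref{Sect:proof non qn}. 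Rouch\'e then says $\phi - \zeta$ and $f_n - x_j - \delta x_\ell$ have the same number of zeros in $U$, namely one; so $\psi$ is well-defined, and holomorphic by the inverse function theorem.

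The main obstacle is precisely this Rouch\'e estimate, which is where every ingredient of the setup is used at once: the bound $|w(z) - x_\ell| < r$ coming from $D$ being a $(i,\ell)$-disk, the bound $|\zeta - x_j| < r$ coming from the target being $(j,i)$, and the radius calibration $|\delta|r < R - r$. Once this is granted, the identification $F_n(\psi(\zeta), w(\psi(\zeta))) = (\phi(\psi(\zeta)), \psi(\zeta)) = (\zeta, \psi(\zeta))$ is a purely formal consequence of $F_n(z,w) = (f_n(z) - \delta w, z)$, and it finishes the proof.
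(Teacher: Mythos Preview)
Your proof is correct and follows essentially the same approach as the paper's: parametrize $D$ as a graph, take the univalent preimage $U$ (the paper's $W$) of $\D_R(x_j+\delta x_\ell)$ inside $\D_r(x_i)$, and use a Rouch\'e estimate on $\partial U$ together with the calibration $|\delta|r<R-r$ to solve $f_n(z)-\delta w(z)=\zeta$ uniquely for each $\zeta\in\D_r(x_j)$. The only cosmetic difference is that the paper compares $f_n-\delta w$ with $f_n-\delta x_\ell$ (so the perturbation has size $<R-r$), while you compare $f_n-\delta w-\zeta$ with $f_n-(x_j+\delta x_\ell)$ (perturbation of size $<R$); both are equivalent Rouch\'e applications. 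One small technical point you glide over and the paper makes explicit: to speak of $|f_n(z)-(x_j+\delta x_\ell)|=R$ on $\partial U$ and to have $w$ defined there, one should first shrink $R$ slightly so that $\overline U\subset\D_r(x_i)$ and the biholomorphism extends to the closure.
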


\begin{figure}[htb!]
\centering
\begingroup%
  \makeatletter%
    \setlength{\unitlength}{400bp}%
  \makeatother%
  \begin{picture}(1,0.45364384)%
    \put(0,0){\includegraphics[width=\unitlength]{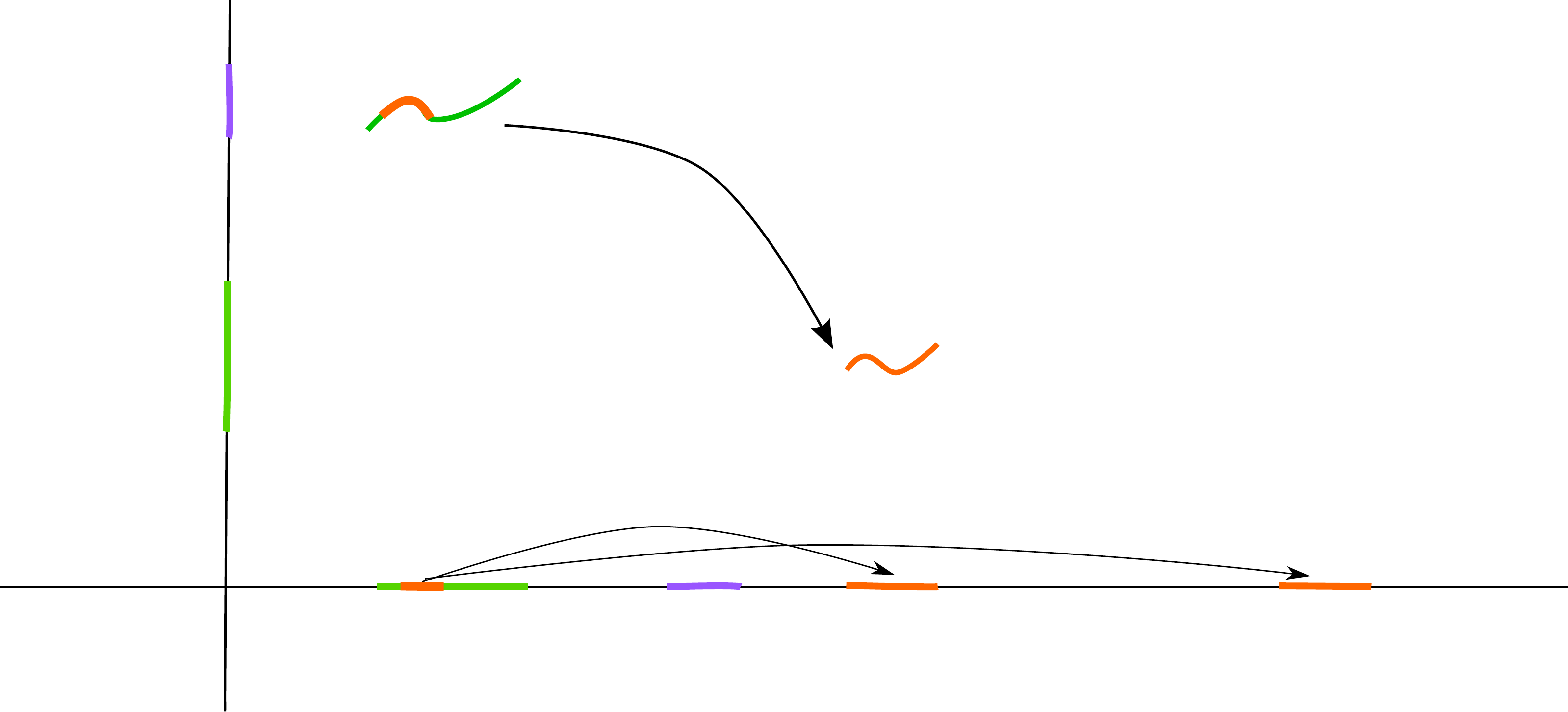}}%
    \put(1,0.05){\color[rgb]{0,0,0}\makebox(0,0)[lb]{\small{$\C$}}}%
    \put(0.09930165,0.43645373){\color[rgb]{0,0,0}\makebox(0,0)[lb]{\small{$\C$}}}%
    \put(0.24161921,0.035){\color[rgb]{0,0,0}\makebox(0,0)[lb]{\small{$\D_r(x_i)$}}}%
    \put(0.4253885,0.035){\color[rgb]{0,0,0}\makebox(0,0)[lb]{\small{$\D_r(x_\ell)$}}}%
    \put(0.54007161,0.035){\color[rgb]{0,0,0}\makebox(0,0)[lb]{\small{$\D_r(x_j)$}}}%
    \put(0.05,0.21873111){\color[rgb]{0,0,0}\makebox(0,0)[lb]{\small{$\D_r(x_i)$}}}%
    \put(0.05,0.38177454){\color[rgb]{0,0,0}\makebox(0,0)[lb]{\small{$\D_r(x_\ell)$}}}%
    \put(0.78536395,0.035){\color[rgb]{0,0,0}\makebox(0,0)[lb]{\small{$\D_r(x_j+\delta x_\ell)$}}}
    \put(0.49032953,0.34064557){\color[rgb]{0,0,0}\makebox(0,0)[lb]{\small{$F_n$}}}%
    \put(0.27063541,0.35308108){\color[rgb]{0,0,0}\makebox(0,0)[lb]{\small{$D$}}}%
    \put(0.55250712,0.19){\color[rgb]{0,0,0}\makebox(0,0)[lb]{\small{$F_n(V)$}}}%
    \put(0.24486377,0.40169867){\color[rgb]{0,0,0}\makebox(0,0)[lb]{\small{$V$}}}%
    \put(0.37676234,0.12959677){\color[rgb]{0,0,0}\makebox(0,0)[lb]{\small{$f_n(z)-\delta w(z)$}}}%
    \put(0.57945751,0.11047349){\color[rgb]{0,0,0}\makebox(0,0)[lb]{\small{$f_n(z)-\delta w (z)+\delta x_\ell$}}}%
    \put(0.25170296,0.09930899){\color[rgb]{0,0,0}\makebox(0,0)[lb]{\small{$\tilde{W}$}}}%
  \end{picture}%
\endgroup%
\caption{\small Illustration of the statement and proof of Lemma~\ref{lem:transitive disks}. The disks $\D_r(x_i)$ are contained in larger disks  $\D_R(x_i)$, which do not appear in this picture.}
\label{fig:basicregions}
\end{figure}

\begin{proof}
It is clear that the $w$-coordinates of $F_n(V)$ are contained in $\D_r(x_i)$, regardless of the choice of $V \subset D$. We therefore merely need to find a holomorphic disk $V\subset D$ such that $F_n(V)$  is a  graph over  the disk $\D_r(x_j)$ in the $z$-coordinate.
 Since   $j \in J(i,\ell)$ there is a biholomorphic preimage $W \subset \D_r(x_i)$   of $\D_R(x_j + \delta x_\ell)$ under $f_n$. It follows that  the function $f_n-\delta x_\ell\colon W\to \D_R(x_j)$ is a biholomorphism as well.
Let $z\mapsto (z,w(z))$ be the graph parametrization of $D$. We claim that there exists an open subdomain $\tilde W\subset W$ such that $f_n(z)-\delta w(z)\colon \tilde W\to \D_r(x_j)$ is a biholomorphism. Once this is proved, setting $V:=D \cap (\tilde W \times \mathbb C)$ yields the result.
Notice that up to shrinking $R$ we can assume that  $f_n-\delta x_\ell\colon \overline W\to \overline\D_R(x_j)$  is a homeomorphism.
  For all $z\in \partial W$ we have
$$
|(f_n(z)-\delta w(z))- (f_n(z)-\delta x_\ell)|=|\delta| |x_\ell-w(z)|\leq|\delta| r<R-r
$$
by assumption, hence by  Rouch\'e's Theorem it follows that for every $u \in \D_r(x_j)$ there exists exactly one point $z \in W$ such that $f_n(z)-\delta w(z)=u$.
Setting   $\tilde W:= (f_n-\delta w)^{-1}(\D_r(x_j))$ we have that  $f_n-\delta w\colon \tilde W\to \D_r(x_j)$ is a biholomorphism.
\end{proof}

We conclude the proof of non quasi-normal case by showing that Lemma~\ref{lem:transitive disks} implies that the topological entropy of $F_n$ is at least $\log(k-2)$.

Define the compact subsets of $\C^2$ $$H:=\bigcup_{1\leq i,\ell\leq k}\overline\D_r(x_i)\times\overline\D_r(x_\ell),\quad L:=\bigcap_{m\geq 0} F_n^{-m}(H).$$
Clearly $L$ is forward $F_n$-invariant.
We say that a sequence $(i_0, i_1, i_2, \ldots) \in \{1, \ldots k\}^{\mathbb N}$ is \emph{admissible} if  $i_{m+1} \in J(i_m,i_{m-1})$ for every $m\geq 1$ and similarly, a finite word is admissible if it is the start of an infinite admissible sequence.
 Clearly,  for every admissible sequence $(i_0, i_1, i_2, \ldots) $, there exists a point $P \in L$ for which $F_n^m(P)$ lies in a $(i_{m+1}, i_{m})$-disk for all $m \ge 0$. Moreover for all $m\geq 0$ there are at least $k^2\cdot (k-2)^{m-2}$ admissible words  of length $m$.

Thus $L$ contains at least $(k-2)^m$ points with distinct symbolic representations, which are therefore $(m,\varepsilon)$-separated as soon as
$$\varepsilon <  {\rm   min}_{i,\ell}\,{\rm dist}(\overline\D_r(x_i),\overline\D_r(x_\ell)).   $$
 This proves the claim that $F_n: L \rightarrow L$ has topological entropy at least $\log(k-2)$, which in turn completes the proof of Theorem \ref{thm:non-quasi-normal}.

\section{Periodic cycles of arbitrary order} \label{section:periodic}

We continue to a consider transcendental H\'enon map $F$ of the form
$$
(z,w) \mapsto (f(z) - \delta w, z).
$$
In the previous paper \cite{henon2} we showed that when $\delta = -1$ the map $F$ may not have any fixed point or periodic orbits of period $2$, but if $F$ has neither, then it must have periodic points of order $4$. The proof of this fact relied upon algebraic manipulations of the equation $F^4(z,w) = (z,w)$. Using the techniques presented in the previous sections we can now obtain the following description.

\begin{thm}
A transcendental H\'enon map has infinitely many periodic cycles of any order $N \ge 3$.
\end{thm}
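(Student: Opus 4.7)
The proof follows the case split of Theorem~\ref{thm:Henon Entropy}: in each case we produce, for every value of a parameter, a number of period-$N$ cycles of $F_n$ (hence of $F$, by conjugacy) that tends to infinity with that parameter.

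\emph{Quasi-normal case.} The proof of Theorem~\ref{thm:qn2D} gives, for each $m\in\mathbb N$, an $F_{n_k}$ that is H\'enon-like of degree $d\ge m$ on $\overline{\D_r}\times\overline{\D_r}$. As in the polynomial case, $F_{n_k}^N$ inherits on a suitable sub-bidisk the structure of a H\'enon-like map of degree $d^N$, so by the argument principle (cf.\ Lemma~\ref{polynomial->henon}) $F_{n_k}^N$ has exactly $d^N$ fixed points counted with multiplicity. At most $O(d^{N/2})$ of these have period properly dividing $N$, so at least $\sim d^N/N$ cycles of exact period $N$ remain --- a quantity unbounded in $d$.

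\emph{Non quasi-normal case.} We use the setup of Section~\ref{Sect:proof non qn}: for each $k\ge 3$ there are $F_n$ and disks $\D_r(x_1),\ldots,\D_r(x_k)$ with $|J(i,\ell)|\ge k-2$. Given a cyclically admissible sequence $(i_0,\ldots,i_{N-1})$ of period $N$, a period-$N$ orbit $(p_m)_{m\in\Z/N\Z}$ with $p_m=(z_m,z_{m-1})\in R_{i_{m+1},i_m}$ corresponds to a solution of
\[
z_{m+1}=f_n(z_m)-\delta z_{m-1},\qquad z_m\in\overline{\D_r(x_{i_{m+1}})}.
\]
Admissibility $i_{m+2}\in J(i_{m+1},i_m)$ yields a biholomorphism $f_n\colon W_m\to\D_R(x_{i_{m+2}}+\delta x_{i_m})$ with $W_m\subset\D_r(x_{i_{m+1}})$. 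For $z_{m+1}\in\overline{\D_r(x_{i_{m+2}})}$ and $z_{m-1}\in\overline{\D_r(x_{i_m})}$, the quantity $z_{m+1}+\delta z_{m-1}$ lies in the closed disk $\overline{\D_{r+|\delta|r}(x_{i_{m+2}}+\delta x_{i_m})}\Subset\D_R(x_{i_{m+2}}+\delta x_{i_m})$, so its $f_n|_{W_m}$-preimage lies in a compact subset $\overline{\tilde W_m}\subset W_m\subset\D_r(x_{i_{m+1}})$. Consequently the holomorphic operator
\[
T(z_0,\ldots,z_{N-1})_m:=(f_n|_{W_m})^{-1}(z_{m+1}+\delta z_{m-1})
\]
sends $K:=\prod_{m=0}^{N-1}\overline{\D_r(x_{i_{m+1}})}$ into a set compactly contained in $K^\circ$, and hence by the Schwarz--Pick lemma on the polydisk is a strict Kobayashi contraction. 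Its unique fixed point is the desired periodic orbit. A standard subshift-of-finite-type count produces $\gtrsim(k-2)^N/N$ cyclically inequivalent admissible periodic sequences of exact period $N$, each yielding a distinct period-$N$ cycle by disjointness of the $\overline{\D_r(x_i)}$; letting $k\to\infty$ gives infinitely many cycles of $F_n$, and of $F$ by conjugacy. The essential technical point is the strict containment $T(K)\Subset K^\circ$, for which the geometric gap $R-r-|\delta|r>0$ (used also in the proof of Lemma~\ref{lem:transitive disks}) is crucial; once that is in hand, the contraction and counting are routine.
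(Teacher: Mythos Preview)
Your proof is correct. The quasi-normal case is essentially the same as the paper's: both invoke Dujardin's count of $d^N$ fixed points of the $N$-th iterate of a degree-$d$ H\'enon-like map (the paper cites his Proposition~5.7 directly; you sketch it via the argument principle), and your extra line ruling out that too many of these have strictly smaller period is a welcome addition that the paper leaves implicit.

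In the non-quasi-normal case your route differs from the paper's. You encode a period-$N$ orbit as a fixed point of the holomorphic operator
\[
T(z_0,\ldots,z_{N-1})_m=(f_n|_{W_m})^{-1}(z_{m+1}+\delta z_{m-1})
\]
on a product of disks, observe that the gap $R-r-|\delta|r>0$ forces $T(K)\Subset K^\circ$, and conclude by the Earle--Hamilton/Schwarz--Pick contraction principle. The paper instead builds, for each cyclically admissible word, a nested intersection
\[
\bigcap_{m\ge 0}F_n^{-m}\bigl(\overline{W_{j+m}}\times\overline{\D_r(x_{i_{j+m-1}})}\bigr),
\]
shows via the Gr\"otzsch inequality that its slice by any horizontal disk is a single point, and then uses a cone-field argument to prove this intersection is a holomorphic ``vertical'' graph---a local stable manifold---mapped into itself by $F_n^N$, whence a fixed point. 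Your argument is shorter and more elementary; the paper's yields the additional information that the periodic points produced are saddles with explicitly identified local stable manifolds. For the counting, the paper restricts to admissible $N$-tuples with \emph{pairwise distinct} entries (which forces exact period $N$ automatically and is where the hypothesis $N\ge 3$ enters via the bound $k>3N-1$), while you use the standard subshift count of aperiodic cyclic words; both give a number of cycles tending to infinity with $k$.
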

\begin{proof}
We consider again the family of rescaled transcendental functions $(f_n)$. We have shown that if this sequence is quasi-normal then appropriate restrictions of the H\'enon map $F$ act as H\'enon-like maps of larger and larger degrees. It was proved by Dujardin in \cite{Dujardin04}, Proposition 5.7, that a H\'enon-like map of degree $d$ has exactly  $d^N$ points which are fixed under $F^N$, counted with  multiplicity. It follows that if the family $(f_n)$ is quasi-normal then $F$ has infinitely periodic cycles of any period.



Let us therefore assume that the family $(f_n)$ is not quasi-normal and  fix $N\ge 3$. Let $k > 3N-1$, and let $f_{n_h}$ be the function given by Lemma \ref{thefunction}. Since the subsequence $(n_h)$ plays no further role in this proof, we will just write $n$ instead of $n_h$, and write as before $F_n:=(f_n(z)-\delta w,z).$ Consider the $(i,\ell)$-disks constructed in Definition \ref{defn:disks}, for $i,\ell=1,\dots, k$. Recall from Lemma \ref{lem:transitive disks} that for any $i,\ell=1,\dots, k$ there exists a subset $J(i,\ell) \subset \{1, \ldots , k\}$ with  $\#( J(i,\ell))\geq k-2$ such that for any $j \in J(i,\ell)$, any $(i,\ell)$-disk $D_{i,\ell}$ contains a holomorphic disk $V$ which $F_n$ maps onto an $(j, i)$-disk.
We first claim that the number of  $N$-tuples $(i_0, i_1, \ldots, i_{N-1})$ with distinct entries satisfying
$$
i_{j+1} \in J(i_j, i_{j-1}),\quad j =0,\ldots,N-1,
$$
(where the indices are taken modulo $N$) tends to infinity as $k\ra\infty$. Indeed, the number of $N$-tuples whose entries are all distinct over $k$ symbols is $k\cdot(k-1)\cdot\ldots\cdot (k-N+1)$; on the other hand by Lemma \ref{lem:transitive disks}, the number of such $N$-tuples  which violate the condition  $i_{j+1} \in J(i_j, i_{j-1})$ in at least one index is at most $2N k\cdot(k-1)\cdot\ldots\cdot (k-N+2)$. Hence the number of admissible sequences is at least $k\cdot(k-1)\cdot\ldots\cdot(k-N+2)(k-3N+1)\ra\infty$ as $k\ra\infty$.
Notice that this counting argument breaks down for $N=2$, in agreement with the fact that there exists transcendental H\'enon maps without  periodic points of period $2$.

We will now argue that corresponding to any  sequence $\{(i_0, i_1), \ldots, (i_{N-1}, i_0)\}$ of length $N$ which is periodic in the sense discussed above  we can find a periodic cycle of minimal period $N$.
 
Observe that in the proof of Lemma \ref{lem:transitive disks} the holomorphic disk $V \subset D$ is of the form $D \cap (\tilde{W} \times \mathbb C)$, where $\tilde{W} \subset W$ depends on $D$,  but $W$
is independent of the chosen $(i,\ell)$-disk $D$. Indeed, it is by construction a simply connected domain $W \in \D_r(x_i)$ that is mapped univalently onto $\D_R(x_j + \delta x_\ell)$ by the function $f_n$, hence it depends only on the three indices $i,j,\ell$ of the domain, the $(i,\ell)$-disk, and the codomain, the $(j,i)$-disk.

It follows that having chosen the domain $W$, the intersection of the bidisk $W \times \D_r(x_\ell)$ with the preimage $F_n^{-1}(\D_r(x_j)\times\D_r(x_i))$ is connected; a union of straight horizontal disks $V_w \subset W\times \{w\}$ for $w \in \D_r(x_\ell)$.

Let us now consider the periodic sequence $(i_0, i_1, \ldots, i_{N-1})$ discussed earlier, where each $i_{j+1} \in J(i_j, i_{j-1})$. For each triple $(i_{j-1}, i_j, i_{j+1})$ we select a disk $W_j \subset \D_r(x_{i_j})$ as above, for $j\ge N$ we define these sets inductively by $W_j = W_{j-N}$, obtaining a periodic sequence.
We will consider the nested sets  $$(W_j\times \D_r(x_{i_{j-1}})) \cap F_n^{-1} (W_{j+1}\times \D_r(x_{i_j})) \cap \cdots \cap F_n^{-m}( W_{j+m}\times \D_r(x_{i_{j+m-1}})),$$
 and show that the intersection for all $m\in \N$  is a unique  holomorphic disk which is  a holomorphic  graph $$ \D_r(x_{i_j})\ni z\mapsto (\varphi(z),z)\in W_j\times \D_r(x_{i_{j-1}}),$$ and which is actually the local stable manifold of a saddle periodic point.

Define the compact and forward invariant set
$$
\Gamma:= \bigcup_{j=1, \ldots , N} \left(\bigcap_{m \ge 0} F_n^{-m} ( \overline W_{j+m} \times \overline{\D_r(x_{i_{j+m-1}})}) \right).
$$
Let $D$ be the intersection of a  $(i_j, i_{j-1})$-disk  with $W_j\times \D_r(x_{i_{j-1}})$. We know that the image
$F_n(D)$ contains a holomorphic graph  over  the disk $$\D_{R-|\delta| r}(x_{i_{j+1}})\supset\supset\D_{r}(x_{i_{j+1}}).$$ So
 the modulus of the annulus $D \setminus F_n^{-1} (W_{j+1} \times \D_r(x_{i_j}))$ is bounded away from zero. Applying this observation repeatedly and using the Gr\"oztsch Inequality we have that $D \cap \Gamma$ consists of a single point.

 Applying this argument to the trivial foliation of $W_j\times \D_r(x_{i_{j-1}})$ consisting of disks $D$ of the form $\{w=c\}$ we immediately get that $\Gamma \cap (W_j\times \D_r(x_{i_{j-1}}))$ is a graph $ z\mapsto (\varphi(z),z)$ for some function $\varphi\colon  \D_r(x_{i_j})\to W_j$.

We claim that the function $\varphi$ is actually holomorphic.
Recall that in the proof of Lemma \ref{lem:transitive disks} we can choose the ratio between the radii $r$ and $R$ as large as we wish.
The function $f_n$ maps $W_j$ univalently onto $\D_R(x_{i_{j+1}}+\delta x_{i_{j-1}})$.
By applying Cauchy estimates  to $f_n^{-1}$ from $\D_R(x_{i_{j+1}}+\delta x_{i_{j-1}})$ into $\D_r(x_{i_j})$ it follows that $|f_n^\prime(z)|$ can be made arbitrarily large on the subset of $W_j$ that is mapped by $f_n$ onto
$$\D_{r+|\delta |r}(x_{i_{j+1}}+\delta x_{i_{j-1}})\subset \subset\D_R(x_{i_{j+1}}+\delta x_{i_{j-1}}) .$$
 It follows that we may assume that the derivative $|f_n^\prime|$ is arbitrarily large on $(W_j\times \D_r(x_{i_{j-1}})) \cap (F_n^{-1} (W_{j+1}\times \D_r(x_{i_j}) ))$ for every $j$.

Recall that
$$
DF_n(z,w) = \left(
       \begin{array}{cc}
         f_n^\prime(z) & -\delta \\
         1 & 0 \\
       \end{array}
     \right),
$$
hence when $|f_n^\prime(z)|$ is sufficiently large the horizontal cone field $\mathcal{C}_h$ containing the tangent vectors $(v_1,v_2)$ with $|v_2| \le 2|v_1|$ is forward invariant. Let $\mathcal{C}_v$ be the vertical cone field, given by the pullback under $dF_n$ of the constant vertical cone field defined by $|v_2| \ge 2|v_1|$. It follows that $\mathcal{C}_v$ is backwards invariant for any point in $F_n(W_j \times \D_r(x_{i_{j-1}}))$, and moreover, any non-constant tangent vector in $\mathcal{C}_v$ is contracted by some uniform factor, while vectors in $\mathcal{C}_h$ are uniformly expanded. Thus $\Gamma$ is a hyperbolic forward invariant set by the cone criterion, and through every point $(z,w) \in \Gamma$ there exists a stable manifold $W^s(z,w)$. It immediately follows that $\Gamma \cap (W_j\times \D_r(x_{i_{j-1}}))$ has to  coincide with a local stable manifold, and thus the function $\varphi$ is actually holomorphic.

By the forward invariance of $\Gamma$ we know that the holomorphic disk $\Gamma \cap (W_j\times \D_r(x_{i_{j-1}}))$ is mapped into itself by $F_n^N$. The existence of a saddle periodic orbit of period $N$ follows.

Since the maps $F_n$ are all conjugate to $F$ it follows that $F$ has infinitely many periodic cycles of any order $N \ge 3$.
\end{proof}

For polynomial H\'enon maps saddle periodic points form a dense subset of the Julia set $J = J^+ \cap J^-$. While the periodic points constructed above in the not quasi-normal setting are all saddle points, it is unclear to the authors whether there also exist (infinitely many) saddle points of any order $N\ge 3$ in the quasi-normal case.

 \section{Arbitrary Growth of entropy} \label{section:lacunary}

In \cite{Dujardin04}, Dujardin constructed   transcendental H\'enon maps with infinite entropy  by letting $f(z)$ be  an entire function which, on suitable disks $D_i$,  is  well approximated by  polynomials of some degree $d_i\ra\infty$,   to deduce that  the corresponding H\'enon map is H\'enon-like
on the bidiscs $D_i\times D_i$ of the same degree $d_i$. It follows that  the H\'enon map has topological entropy at least $\log d_i\ra\infty$. 

The rate of the growth of entropy is then given by the relation between $d_i$ and the radii of the disks  $D_i$.

In this section we show that the entropy of \emph{lacunary} power series, i.e. power series with mostly vanishing coefficients, can grow at any prescribed rate. We will first prove the statement for entire functions in one variable:

 \begin{theorem}\label{thm:lacunary}
Let $h(R)$ be a continuous positive increasing  function $h:[0,\infty)\rightarrow [0,\infty)$
with $h(0)=0$ and $\lim_{R\rightarrow \infty} h(R)=\infty$.
Then there exists an entire function $f(z)$ and a sequence  of radii $R_j\nearrow \infty$ so that the topological entropy of
$f$ on $\overline{\D_{R_j}}$ equals $h(R_j)$.
\end{theorem}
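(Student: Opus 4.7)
The plan is to realize $f$ as a lacunary entire function $f(z)=\sum_{j\ge 1}c_j z^{n_j}$ for rapidly increasing integers $n_j$ and carefully chosen coefficients $c_j\in\C$, with the property that on each disk $\overline{\D_{R_j}}$ the dominant term is $c_j z^{n_j}$, so that $f$ acts as a polynomial of degree $n_j$ there. Since $h\colon[0,\infty)\to[0,\infty)$ is continuous and increasing with $h(0)=0$ and $\lim_{R\to\infty}h(R)=\infty$, for every integer $n\ge 2$ there is a unique $R>0$ with $h(R)=\log n$. The idea is to pick the sequence $(n_j)$ so that, setting $R_j:=h^{-1}(\log n_j)$, we obtain a strictly increasing sequence of radii with $R_j\nearrow\infty$; the arbitrary rate of $h$ is absorbed into the choice of $(n_j)$.

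First, the sequences $(n_j)$ and $(c_j)$ are chosen inductively. Suppose $n_1<\cdots<n_{j-1}$ and $c_1,\dots,c_{j-1}$ have been fixed. Pick $n_j$ much larger than $n_{j-1}$ (so that $R_j$ is much larger than $R_{j-1}$) and then choose $c_j\in\C$ with
$$
|c_j|R_j^{n_j}\ \ge\ 4\Bigl(R_j+\sum_{k<j}|c_k|R_j^{n_k}\Bigr),
$$
so that at $|z|=R_j$ the term $c_j z^{n_j}$ dominates all earlier terms by a factor of $4$ and is much larger than $R_j$. The future exponents $n_k$ ($k>j$) are then required, at their stage of the induction, to be so large that $|c_k|R_{j'}^{n_k}\le 2^{-k}$ for every $j'\le j$; this is always possible because $|c_k|R_{j'}^{n_k}=|c_k|R_k^{n_k}(R_{j'}/R_k)^{n_k}$ and the factor $(R_{j'}/R_k)^{n_k}$ can be made arbitrarily small by enlarging $n_k$, while still satisfying the analogous lower bound for $|c_k|R_k^{n_k}$ at stage $k$. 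This assembles into a single induction producing a power series that converges on all of $\C$ (hence $f$ is entire), and which satisfies $|f(z)-c_j z^{n_j}|<\tfrac12|c_j|R_j^{n_j}$ on $\overline{\D_{R_j}}$ for every $j$.

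Next, deduce the polynomial-like structure and compute the entropy. On $\partial\D_{R_j}$ the previous estimate gives $|f(z)|>\tfrac12|c_j|R_j^{n_j}=:M_j/2>2R_j$, and Rouch\'e's theorem comparing $f$ with $c_j z^{n_j}$ on $\overline{\D_{R_j}}$ shows that $f(\partial\D_{R_j})$ winds exactly $n_j$ times around $0$. Set $U_j:=\D_{M_j/2}$ and $U_j':=f^{-1}(U_j)\cap\D_{R_j}$; then $U_j'\Subset\D_{R_j}\Subset U_j$, and $f\colon U_j'\to U_j$ is a proper holomorphic map of degree $n_j$, i.e.\ a polynomial-like map of degree $n_j$. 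By the theorem of Gromov and Lyubich, extended to polynomial-like maps through Douady--Hubbard straightening, the filled Julia set $K_j:=\bigcap_{m\ge 0}f^{-m}(\overline{U_j'})$ is a forward invariant compact set with $\htop(K_j,f)=\log n_j$, giving the lower bound for the entropy on $\overline{\D_{R_j}}$. For the matching upper bound, any forward invariant compact $X\subset\overline{\D_{R_j}}$ satisfies $f(X)\subset X\subset\overline{\D_{R_j}}$; since $|f|>R_j$ on $\partial\D_{R_j}$, $X$ cannot meet the boundary, hence $X\subset\D_{R_j}$, and then $X\subset f^{-1}(U_j)\cap\D_{R_j}=U_j'$. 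Iterating gives $X\subset K_j$, so $\htop(X,f)\le\log n_j$. Together these give $\htop(\overline{\D_{R_j}},f)=\log n_j=h(R_j)$, as required.

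The main obstacle is the simultaneous inductive tuning of $(n_j)$ and $(c_j)$: each new coefficient $c_k$ ($k>j$) perturbs the estimates at every earlier scale $R_j$, and the bookkeeping must guarantee that the domination bounds at every $R_j$ continue to hold as later coefficients are introduced. Lacunarity---taking the ratios $n_k/n_{k-1}$ extremely large---is the mechanism that makes this feasible, because the factors $(R_j/R_k)^{n_k}$ overwhelm any bound on $|c_k|$ needed for dominance at $R_k$. Once this bookkeeping is in place, the polynomial-like structure is an immediate consequence and the entropy identity follows from the classical polynomial-like entropy formula together with the upper bound argument above.
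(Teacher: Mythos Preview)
Your approach matches the paper's: construct $f$ as a lacunary series $\sum c_j z^{n_j}$ with a single dominant monomial at each scale $R_j=h^{-1}(\log n_j)$, obtain a polynomial-like map of degree $n_j$ there, and read off $\htop=\log n_j=h(R_j)$. Your explicit upper-bound argument (any forward-invariant compact $X\subset\overline{\D_{R_j}}$ is forced into the filled Julia set $K_j$) is a welcome detail the paper leaves implicit.

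One step needs tightening. Douady--Hubbard straightening requires the domain $U_j':=f^{-1}(\D_{M_j/2})\cap\D_{R_j}$ to be a topological disk, but your bound $|f-c_j z^{n_j}|<\tfrac12 M_j$ on $\overline{\D_{R_j}}$ does not by itself guarantee that $U_j'$ is connected, so the phrase ``i.e.\ a polynomial-like map'' is not yet justified. The paper imposes the sharper estimate $|g_j|<M_j/2^{n_j}$ on $\overline{\D_{R_j}}$ and uses it to prove connectedness explicitly: with that bound every zero of $f$ in $\D_{R_j}$ lies in $\D_{R_j/2}$, one checks $\D_{R_j/2}\subset U_j'$, and since each component of $U_j'$ must contain a zero (by properness and surjectivity onto $\D_{M_j/2}$), $U_j'$ is connected. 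Your induction absorbs this stronger requirement without difficulty---just ask that the tail contributions at scale $R_j$ be smaller by the extra factor $2^{-n_j}$---so the repair is cosmetic rather than structural.
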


\begin{lemma}\label{stability}
Let $P(z):=az^n$ with $a\neq 0$ and $n\geq 2$. Let $r>0$, set $R:=|a|r^n$, and assume that $R/2>r$.
Let $g\colon \overline \D_r\to \C$ be a holomorphic function such that $|g(z)|< R/2^n$ for all $z\in \overline \D_r$.
Then the function defined as $f:=P+g$,
$$f: \D_r\cap f^{-1}(\D_{\frac{R}{2}})\to \D_{\frac{R}{2}}$$ is a polynomial-like map of degree $n$.
\end{lemma}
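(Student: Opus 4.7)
My plan is to verify the defining properties of a polynomial-like map of degree $n$ for the triple $(f, U, V)$, where
$$
U := \D_r \cap f^{-1}(\D_{R/2}), \qquad V := \D_{R/2}:
$$
namely that $U$ and $V$ are topological disks with $U \Subset V$, and that $f|_U\colon U\to V$ is proper holomorphic of degree $n$. The whole argument is driven by Rouch\'e-type comparisons between $f=P+g$ and the monomial $P$, exploiting the uniform smallness of $g$ relative to $P$.

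First I would establish a boundary estimate on $\partial\D_r$: there $|P(z)|=|a|r^n=R$, while $|g(z)|<R/2^n\le R/4$ since $n\ge 2$, so
$$
|f(z)|\ge|P(z)|-|g(z)|>R-R/4=3R/4>R/2.
$$
Hence $f(\partial\D_r)\cap\overline{\D_{R/2}}=\varnothing$, which forces $\overline U\subset\D_r$; combined with the hypothesis $R/2>r$ this gives $\overline U\subset\D_r\Subset V$. Properness is then immediate: for any compact $K\subset V$, the set $(f|_U)^{-1}(K)=f^{-1}(K)\cap\D_r$ is closed in $\C$, bounded, and avoids $\partial\D_r$, hence compact. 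For the degree, I fix $w\in V$ and apply Rouch\'e to $P-w$ and $g$ on $\partial\D_r$: since $|P(z)-w|\ge R-|w|>R/2>R/2^n>|g(z)|$, the functions $f-w=(P-w)+g$ and $P-w=az^n-w$ have the same number of zeros in $\D_r$, namely $n$ with multiplicity, all of them lying in $U$. Thus $\deg(f|_U)=n$.

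The main obstacle, and the step that I expect to require the most care, is showing that $U$ is a topological disk. For connectedness I would observe that for $|z|\le r/2$ one has $|f(z)|\le|P(z)|+|g(z)|<R/2^n+R/2^n=R/2^{n-1}\le R/2$, so $\overline{\D_{r/2}}\subset U$; a parallel Rouch\'e argument on circles $\partial\D_{r'}$ with $r/2<r'<r$ (where $|P|=|a|(r')^n>R/2^n>|g|$) shows moreover that all $n$ preimages of $0$ lie in $\D_{r/2}$. The connected component of $U$ containing the connected set $\D_{r/2}$ therefore accounts for all $n$ preimages of $0$, hence has degree $n$, and since the degrees of the components sum to $n$ and each is positive $U$ has a single component. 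For simple connectivity I would invoke Riemann--Hurwitz, $\chi(U)=n-B$, aiming to show that the total branching $B$ (the number of zeros of $f'$ in $U$ counted with multiplicity) is at most $n-1$. A Cauchy estimate using disks of radius $r/2$ (which sit in $\overline{\D_r}$) gives $|g'(z)|<R/(2^{n-1}r)$ on $\partial\D_{r/2}$, whereas $|P'(z)|=n|a|(r/2)^{n-1}=nR/(2^{n-1}r)$, so one more Rouch\'e application yields exactly $n-1$ zeros of $f'$ in $\D_{r/2}$. The remaining technical difficulty is to rule out further critical points of $f$ in $U\setminus\overline{\D_{r/2}}$; my plan is to iterate the Cauchy + Rouch\'e estimate on circles $\partial\D_{r'}$ with $r'$ slightly less than $r$, using the pointwise lower bound $|P'(z)|=n|a||z|^{n-1}$ together with interior Cauchy bounds on $|g'|$.
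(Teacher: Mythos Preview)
Your arguments for the boundary estimate, properness, degree, and connectedness of $U$ are essentially identical to the paper's. The divergence is entirely in the simple connectivity step.

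The paper disposes of simple connectivity in one line via the maximum principle: if $\gamma\subset U$ is a simple closed curve, its interior $H$ lies in $\D_r$, and since $|f|<R/2$ on $\gamma$ the maximum principle forces $|f|<R/2$ on $H$, hence $H\subset U$. This uses nothing beyond what you have already established.

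Your Riemann--Hurwitz route is not only heavier but has a real gap in the place you yourself flag. The plan to ``iterate the Cauchy $+$ Rouch\'e estimate on circles $\partial\D_{r'}$ with $r'$ slightly less than $r$'' does not close: the Cauchy bound $|g'(z)|\le (R/2^n)/(r-|z|)$ blows up as $|z|\to r$, while $U$ can reach close to $\partial\D_r$. Concretely, for $n=2$ one has $U\subset\D_{r\sqrt 3/2}$ but not obviously smaller, and the Cauchy bound only excludes critical points in the annulus $r(2-\sqrt2)/4<|z|<r(2+\sqrt2)/4$; the window $[r(2+\sqrt2)/4,\,r\sqrt3/2)\approx[0.854r,\,0.866r)$ is not covered, so you cannot conclude $B\le n-1$ from this estimate alone. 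Replacing the Riemann--Hurwitz argument by the maximum principle removes the difficulty entirely.
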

\begin{proof}
The function $f$ satisfies $f(\partial \D_r)\cap  \overline{\D_{R/2}}=\varnothing$ and by Rouch\'e's Theorem the winding number of the curve $f(\partial \D_r)$ around the origin is $n$. It follows that $f:\D_r\cap f^{-1}(\D_{R/2})\to \D_{R/2}$ is a proper map of degree $n$, and by the maximum principle every connected component of its domain is simply connected. To prove that it is polynomial-like it suffices to show that $\D_r\cap f^{-1}(\D_{R/2})$ is connected. Notice   that $|f|>0$  for $|z|>r/2$, hence  all preimages of $0$ under $f$ are contained in  $\D_{r/2}$, and hence all connected components of $f^{-1}(\D_{R/2})$ have to intersect $\D_{r/2}$. On the other hand, $\D_{r/2}\subset f^{-1}(\D_{R/2})$, hence there is only one connected component of $f^{-1}(\D_{R/2})$ in $\D_R$ as claimed.
\end{proof}

Recall that the entropy of a polynomial-like map of degree $n$ is $\log n$. It follows from the fact that such maps are topologically  conjugate (in fact, hybrid conjugate) to a true polynomial of degree $d$ by Douady-Hubbard Straightening Theorem \cite{DH85} in a neighborhood of their Julia set, or one can prove it directly as for polynomials following for example \cite{lyubich}.

\begin{proof}[Proof of Theorem~\ref{thm:lacunary}]
 We construct $f$ as a lacunary series $\sum_{i=1}^\infty a_iz^{n_i}$  with $(a_i)$ positive real numbers. Define $g_j:=\sum_{i\neq j} a_iz^{n_i}$. By choosing $a_i, r_i,n_i$ appropriately we will  ensure that for each $j$   the monomial  $a_j z^{n_j}=f-g_j$ is the leading term on the circle of radius $r_j$, in the precise way needed to apply Lemma~\ref{stability}.


We will construct the series inductively, along with a sequence of radii $(r_j)$ such that for all integer $j\geq 1$ we have 
\begin{align}\label{zero}
&h(r_j)=\log n_j;\\
\label{una}
&|g_j(z)|\leq \frac{r_j}{2^{n_j}},\quad \forall\, z\in \D_{r_j};\\
 \label{edue}
&a_j r_j^{n_j}>2r_j;\\
\label{etre}
&a_j\leq 2^{-(j+1)j/2}.
\end{align}

By \eqref{etre} the series converges to an entire function $f$. By \eqref{una},\eqref{edue}, and Lemma \ref{stability} we immediately obtain that
the topological entropy of $f$ on $\overline{\D_{r_j}}$ equals $\log n_j$, which by \eqref{zero} is equal to $h(r_j)$.

We start setting $a_1=1/2$, $r_1>2$ such that $h(r_1)=\log(n_1)$ for some  integer $n_1 \geq 2$.
We will choose $a_2, r_2, n_2$ such that
\begin{equation}\label{equattro}
a_2r_1^{n_2}\leq \frac{a_1r_1^{n_1}}{2^{n_1+1}},
\end{equation}
and
\begin{equation}\label{ecinque}
a_1r_2^{n_1}\leq \frac{a_2r_2^{n_2}}{2^{n_2+1}}.
\end{equation}
Consider all possible radii $r_2 > r_1$ for which $h(r_2)$ is  of the form $\log(n_2)$  for some integer $n_2$.
Set $a_2:= a_1r_1^{n_1-n_2}/2^{n_1+1}$, which satisfies \eqref{equattro}.
Substituting in \eqref{ecinque} we obtain
$$\left(\frac{r_2}{r_1}\right)^{n_2-n_1}\geq 2^{n_1+n_2+2},$$
which is satisfied once $r_2$ (and hence $n_2$) is chosen large enough.
Notice that $a_2=1/2 \frac{1}{2^2}$, hence \eqref{etre} is satisfied, and similarly if $r_2$ (and hence $n_2$) is chosen large enough \eqref{edue} is satisfied. Iterating this procedure yields the desired series.
\end{proof}

\begin{corollary}\label{cor:lacunary Henon}
Let $h, f$ be as in Theorem~\ref{thm:lacunary}. Then the topological entropy of $F(z,w)=(f(z)-\delta w,z)$ on $\overline\D_{r_j}\times \overline\D_{r_j}$ equals $h(r_j)$ for all $j$ sufficiently large.
\end{corollary}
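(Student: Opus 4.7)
The plan is to apply Lemma~\ref{polynomial->henon} directly to the restriction of $f$ to $\overline{\D_{r_j}}$ in order to conclude that $F$ acts as a Hénon-like map of degree $n_j$ on the bidisk $\overline{\D_{r_j}}\times\overline{\D_{r_j}}$, and then invoke Theorem~\ref{thm:Henonlike entropy}, which yields topological entropy exactly $\log n_j = h(r_j)$ for the maximal invariant compact subset of that bidisk.

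First I will verify the hypotheses of Lemma~\ref{polynomial->henon} on $\overline{\D_{r_j}}$. Writing $f = a_j z^{n_j} + g_j$ as in the proof of Theorem~\ref{thm:lacunary} and setting $R_j := a_j r_j^{n_j}$, the estimates \eqref{una} and \eqref{edue} give $|a_j z^{n_j}| = R_j > 2 r_j$ and $|g_j(z)| \le r_j/2^{n_j}$ on the circle $|z|=r_j$. In particular $|g_j| < |a_j z^{n_j}|$ on $\partial\D_{r_j}$, so Rouché's theorem shows that the winding number of $f(\partial\D_{r_j})$ around $0$ coincides with that of the monomial $a_j z^{n_j}$, namely $n_j$.

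Second, I need the stronger pointwise estimate $|f(z)| > (|\delta|+1) r_j$ on $\partial\D_{r_j}$. Unwinding the recursion $a_j = a_{j-1} r_{j-1}^{n_{j-1}-n_j}/2^{n_{j-1}+1}$ together with the constraint $(r_j/r_{j-1})^{n_j-n_{j-1}} \ge 2^{n_{j-1}+n_j+2}$ from the proof of Theorem~\ref{thm:lacunary}, a short computation gives
\[
\frac{R_j}{r_j} \;\ge\; \frac{R_{j-1}}{r_{j-1}} \cdot \left(\frac{r_j}{r_{j-1}}\right)^{n_{j-1}-1} \cdot 2^{n_j+1} \;\ge\; \frac{R_{j-1}}{r_{j-1}} \cdot 2^{n_j+1},
\]
using $r_j > r_{j-1}$ and $n_{j-1}\ge 2$. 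Since $n_j\to\infty$, this iterates to $R_j/r_j \to \infty$. Hence for $j$ sufficiently large $R_j > (|\delta|+2) r_j$, and combined with $|g_j(z)|\le r_j/2^{n_j}$ this yields $|f(z)| \ge R_j - r_j/2^{n_j} > (|\delta|+1) r_j$ on $\partial\D_{r_j}$.

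With both hypotheses in hand, Lemma~\ref{polynomial->henon} implies that $F$ is a Hénon-like map of degree $n_j$ on $\overline\D_{r_j}\times\overline\D_{r_j}$, and Theorem~\ref{thm:Henonlike entropy} then gives topological entropy exactly $\log n_j = h(r_j)$ on the maximal invariant compact subset of the bidisk, which is the quantity claimed. The main obstacle is precisely the quantitative bound $|f| > (|\delta|+1) r_j$: the construction of Theorem~\ref{thm:lacunary} only records the weaker $R_j > 2 r_j$ in \eqref{edue}, so one has to extract additional margin from the recursion. The rapid geometric growth built into the construction via the factor $2^{n_j+1}$ supplies this margin automatically for large $j$, which is why the conclusion is stated only for $j$ sufficiently large.
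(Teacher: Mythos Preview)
Your proof is correct and follows the same route as the paper: verify the winding number via Rouch\'e, check the lower bound $|f|>(|\delta|+1)r_j$ on $\partial\D_{r_j}$, then apply Lemma~\ref{polynomial->henon} and Theorem~\ref{thm:Henonlike entropy}. In fact you supply more than the paper does: the paper simply asserts that the construction in Theorem~\ref{thm:lacunary} yields $|f(z)|>(|\delta|+1)r_j$ for large $j$, whereas you extract the quantitative growth $R_j/r_j\to\infty$ from the recursion, which is the honest justification of that claim.
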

\begin{proof}
In the proof of Theorem~\ref{thm:lacunary} we obtained a sequence of  disks $\overline\D_{r_j}$ with $r_j \nearrow \infty$ such that  $|f(z)|> (|\delta| + 1)\cdot r_j$ for $|z| = r_j$ and $j$ sufficiently large, and that $f(z)$ winds $n_j$ times around the origin as $z$ runs around the circle $\partial \D_{r_j}$. It follows from Lemma \ref{polynomial->henon} that the restriction of $F$ to the bidisk $\overline\D_{r_j} \times \overline\D_{r_j}$ is a H\'enon-like map of degree $n_j$, which by Theorem \ref{thm:Henonlike entropy} implies that the topological entropy on $\overline\D_{r_j}\times\overline \D_{r_j}$ equals $h(r_j)$ for all $j$ sufficiently large.
\end{proof}

\bibliographystyle{amsalpha}
\bibliography{Henon2}

\end{document}